\definecolor {processblue}{cmyk}{0.96,0,0,0}
\newtheorem{theorem}{Theorem}[section]
\newtheorem{lem}[theorem]{Lemma}
\numberwithin{equation}{section}
\begin{document}

\title{Upper Bounds on the average eccentricity of Graphs of Girth $6$ and 
$(C_4$, $C_5)$-free Graphs}

\author{Alex Alochukwu, Peter Dankelmann (University of Johannesburg)}

\maketitle

\begin{abstract}
Let $G$ be a finite, connected graph. The eccentricity of a vertex $v$ of 
$G$ is the distance from $v$ to a vertex farthest from $v$. 
The average eccentricity of $G$ is the arithmetic mean of the eccentricities
of the vertices of $G$.
We show that the average eccentricity of a connected graph $G$ of girth at 
least six is at most $\frac{9}{2} \lceil \frac{n}{2\delta^2 - 2\delta+2} \rceil + 7$, 
where $n$ is the order of $G$ and $\delta$ its minimum degree. We construct 
graphs that show that whenever $\delta-1$ is a prime power, then this bound is 
sharp apart from an additive constant. For graphs containing a vertex of large
degree we give an improved bound. 
We further show that if the girth condition on $G$ is relaxed to $G$ having 
neither a $4$-cycle nor a $5$-cycle as a subgraph, then similar and only slightly
weaker bounds hold. 
\end{abstract}
Keywords: average eccentricity; eccentricity; eccentric mean; total eccentricity index; 
 minimum degree;  girth  \\[5mm]
MSC-class: 05C12

\section{Introduction}

Let $G$ be a connected graph. The {\em eccentricity} $e(v)$ of a vertex $v$ is the
distance from $v$ to a vertex farthest from $v$,  i.e., 
$e_G(v) = \max_{w \in V(G)}d_G(v,w)$, where $V(G)$ denotes the vertex set of 
$G$ and $d_G(v,w)$ is the usual distance between $v$ and $w$. 
The {\em average eccentricity}
${\rm avec}(G)$ of $G$ is defined as the arithmetic mean of the eccentricities of
its vertices, i.e., ${\rm avec}(G) = \frac{1}{n} \sum_{v \in V(G)} e_G(v)$, where 
$n$ is the order of $G$.  
The average eccentricity was introduced under the name {\em eccentric mean} by 
Buckley and Harary \cite{BucHar1990}, but it attracted major attention only after 
its first systematic study in \cite{DanGodSwa2004}. One of the basic results in
this paper determined the maximum average distance of a connected graph of given order:

\begin{theorem} {\rm  \cite{DanGodSwa2004}}  \label{theo:path-maximises-avec}
If $G$ is a connected graph of order $n$, then 
\[
 {\rm avec}(G) \leq \frac{1}{n} \left \lfloor \frac{3n^2}{4} -\frac{n}{2} \right \rfloor, 
\] 
with equality if and only if $G$ is a path.  
\end{theorem}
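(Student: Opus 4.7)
The plan is to reduce the problem from general connected graphs to trees, and then, among trees of order $n$, to show that the path $P_n$ maximises total eccentricity. The bound then follows from a direct computation on $P_n$.

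For the first reduction, observe that if $T$ is any spanning tree of a connected graph $G$, then $d_G(u,v) \leq d_T(u,v)$ for all vertices $u,v$, so $e_G(v) \leq e_T(v)$ for every $v$ and hence ${\rm avec}(G) \leq {\rm avec}(T)$. It therefore suffices to prove the bound for trees.

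For the tree case, I would use a shifting argument. Let $T$ be a tree of order $n$ that is not a path. Choose a diametrical path $P\colon u_0 u_1 \ldots u_d$; since $T \not\cong P_n$, we have $d < n-1$, so some vertex lies off $V(P)$. Both $u_0$ and $u_d$ are leaves, otherwise $P$ could be extended, and some internal vertex $w$ of $P$ carries a subtree $S$ off $P$. Construct $T^{\ast}$ by detaching $S$ at $w$ and reattaching its vertices one-by-one past $u_0$ so as to prolong $P$ into a strictly longer path. The claim to prove is $\sum_{v} e_{T^{\ast}}(v) \geq \sum_{v} e_{T}(v)$, with strict inequality unless $T$ was already a path. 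Iterating this shifting operation terminates in $P_n$, so $P_n$ is the extremal tree. The final step is a direct computation: in $P_n$ with vertices $v_1, \ldots, v_n$, we have $e_{P_n}(v_i) = \max(i-1,n-i)$, and summing over $i$ gives $\lfloor \frac{3n^2}{4} - \frac{n}{2} \rfloor$, which on dividing by $n$ yields the stated bound.

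The hard part is the shifting argument. Moving $S$ from the interior vertex $w$ to the end of $P$ increases the eccentricities of vertices inside $S$ (they become farther from $u_d$), but it may \emph{decrease} eccentricities of vertices on $P$ lying near $u_0$, since the ``far end'' of $P$ from their point of view changes. A careful case analysis is needed, splitting by whether a vertex lies in $S$ or in $V(T)\setminus V(S)$, and by whether its eccentricity is realised on the new extension, at $u_d$, or inside $S$, to check that the aggregate change in $\sum_{v} e(v)$ is nonnegative. The uniqueness statement $G \cong P_n$ then follows by tracking the equality cases through the spanning-tree step and through each shifting step.
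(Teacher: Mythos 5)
A preliminary remark: the paper contains no proof of Theorem \ref{theo:path-maximises-avec}; it is imported from \cite{DanGodSwa2004} and used only as a black box (through its weighted version, Lemma \ref{la:weighted-avec-maximise-by-path}). So there is nothing in this paper to compare your argument against, and I can only assess the proposal on its own terms.

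Your skeleton is the natural one and its outer layers are sound: the reduction to spanning trees via $e_G(v)\le e_T(v)$ is correct, and the closed form $\sum_{i=1}^n \max(i-1,n-i)=\lfloor \frac{3n^2}{4}-\frac{n}{2}\rfloor$ checks out for both parities of $n$. The gap is that the one step carrying all the content --- that detaching the subtree $S$ from an internal vertex $w$ of the diametrical path $P\colon u_0\ldots u_d$ and appending its vertices as a path beyond $u_0$ does not decrease $\sum_v e(v)$ --- is asserted rather than proved, and the case analysis you sketch is aimed at a phenomenon that cannot occur. The missing idea that closes it cleanly is the standard fact that in a tree every eccentricity is attained at an end of a fixed diametrical path, i.e.\ $e_T(x)=\max\bigl(d_T(x,u_0),d_T(x,u_d)\bigr)$ for all $x$. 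From this: (i) for $x\in V(S)$ the path to either end passes through $w$, and since $P$ is diametrical $d_T(x,w)\le\min\bigl(d_T(w,u_0),d_T(w,u_d)\bigr)$, so $e_T(x)=d_T(x,w)+\max\bigl(d_T(w,u_0),d_T(w,u_d)\bigr)\le d$, whereas the vertex placed at position $j$ beyond $u_0$ in $T^{\ast}$ satisfies $e_{T^{\ast}}\ge d+j>d$, a strict increase; (ii) for $x\notin V(S)$ the distances within $V(T)\setminus V(S)$ are unchanged and $u_0,u_d\notin V(S)$, so $e_{T^{\ast}}(x)\ge\max\bigl(d_T(x,u_0),d_T(x,u_d)\bigr)=e_T(x)$; no eccentricity on $P$ near $u_0$ can drop, contrary to your worry. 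Each shift therefore strictly increases the total eccentricity, the iteration terminates at $P_n$, and the uniqueness claim then needs only the additional (easy but not automatic) observation that adding any chord to a Hamiltonian path strictly lowers the eccentricity of one of its endpoints. With these points supplied the proof is complete; as written, the central inequality is still a claim.
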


Several bounds on the average eccentricity have been found since. For example for 
graphs of given order and size \cite{AliDanMorMukSwaVet2018, TanZho2012}, 
and for maximal planar graphs \cite{AliDanMorMukSwaVet2018}. Several relations 
between the average eccentricity 
and other graph parameters, for example 
independence number \cite{DanMuk2014, DanOsa2019, Ili2012}, domination number 
\cite{DanMuk2014, DanOsa2019, DuIli2013, DuIli2016, Du2017}, clique number 
\cite{DasMadCanCev2017, Ili2012}, chromatic number \cite{TanWes2019}, 
proximity \cite{MaWuZha2012} 
and Wiener index \cite{DarAliKlaDas2018} have been explored.
Bounds on the average eccentricity of the strong product of graphs were given
in \cite{CasDan2019}.

The natural question if the bound in Theorem \ref{theo:path-maximises-avec}
can be improved for graphs whose minimum degree is greater than $1$ was answered
in the affirmative in \cite{DanGodSwa2004}, where it was shown that 
if $G$ is a graph of order $n$ and minimum degree $\delta$, then 
\begin{equation}    \label{eq:bound-avec-given-n-delta}
{\rm avec}(G) \leq \frac{9n}{4(\delta+1)} + \frac{15}{4}, 
\end{equation}
and this inequality is best possible apart from a small additive constant. 
Further results relating the average eccentricity of a graph to its vertex
degrees are known. Bounds on the average eccentricity of trees of given order and 
maximum degree were given in \cite{Ili2012}. Trees with given degree sequence 
that minimise or maximise the average eccentricity were determined in \cite{SmiSzeWan2016}. 
For relations between average eccentricity and Randi\'{c} index see 
\cite{LiaLiu2012}.  
An upper bound on the average eccentricity in terms of order, size and
first Zagreb index was
given in \cite{DasMadCanCev2017}.

It was observed in \cite{DanMukOsaRod2019} that the upper bound 
\eqref{eq:bound-avec-given-n-delta} can be improved for triangle-free graphs
and for graphs not containing four-cycles. 
The aim of this paper is to further pursue the idea of improving 
\eqref{eq:bound-avec-given-n-delta} for graphs not containing certain subgraphs. 
In this paper we give upper bounds on the average eccentricity 
of graphs of girth at least $6$, and of  graphs containing neither $4$-cycles 
nor $5$-cycles, in terms of order,  minimum degree and maximum degree.

The notation we use is as follows. 
We denote the vertex set and edge set of a graph $G$ by $V(G)$ and $E(G)$,
respectively, and $n(G)$ stands for the order of $G$, i.e., for 
the number of vertices of $G$. 
By ${\rm deg}_G(v)$ we mean the degree of $v$, i.e., the number of vertices adjacent 
to $v$. The largest of the eccentricities of the vertices of $G$ is called the 
{\em diameter} of $G$ and denoted by ${\rm diam}(G)$..  

For $k\in \mathbb{Z}$, we denote the set of vertices at distance exactly $k$
and at most $k$ from a vertex $v$ by $N_k(v)$ and $N_{\leq k}(v)$, respectively.
If $uv$ is an edge of $G$, then $N_{\leq k}(uv)$ is the set 
$N_{\leq k}(u) \cup N_{\leq k}(v)$.
The $k$-th power of $G$, denoted by $G^k$, is the graph with the same vertex set
as $G$ in which two vertices are adjacent if their distance is not more than $k$. 

The {\em line graph} of a graph $G$ is the graph $L$ whose vertex set is $E(G)$,
with two vertices of $L$ being adjacent in $L$ if, as edges of $G$, they share 
a vertex.

A {\em matching} of $G$ is a set of edges in which no two edges share a vertex.
The vertex set $V(M)$ of a matching $M$ is the 
set of vertices incident with an edge in $M$. The distance $d_G(e_1, e_2)$ between 
two edges $e_1$ and $e_2$ is the smallest of the distances between a vertex
incident with $e_1$ and a vertex incident with $e_2$. (Note that in general this
is not equal to the distance in the line graph of $G$.) If $M$ is a set of edges,
then the distance $d(e,M)$ between an edge $e$ and $M$ is the smallest of the distances
between $e$ and the edges in $M$. 

If $A\subseteq V(G)$, then we write $G[A]$ for the sugbgraph of $G$ induced by $A$.

By $C_n$ we mean the cycle on $n$ vertices. We say a graph is $C_k$-free if it
does not contain $C_k$ as a (not necessarily induced) subgraph. A graph is
$(C_4,C_5)$-free if it contains neither $C_4$ nor $C_5$ as a subgraph. 
The girth of a graph $G$ is the length of a smallest cycle of $G$.

\section{Preliminary results}

In this section we present some results which will be needed for the proof
or our main theorems. 
  
If $v$ and $w$ are two adjacent vertices of a graph of girth at least $6$, 
then the sets of vertices at distance at most two from $v$ 
or $w$, respectively, in $G-vw$ are disjoint if $G$ has 
girth at least $6$, hence we have the following well-known 
result (see for example \cite{AloDan-manu}).

\begin{lem}[\cite{AloDan-manu}]  \label{la:neighbourhood-in-C4-free-bipartite}
Let $G$ be a  graph of girth at least $6$ and minimum degree $\delta$.  
If $v$ and $w$ are adjacent vertices of $G$, then 
$$|N_{\leq 2}(vw)| 
\geq 2(\delta^2 - \delta  + 1).  $$
\end{lem}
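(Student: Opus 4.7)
The plan is to follow the hint in the paragraph preceding the lemma and work in the edge-deleted graph $G' := G - vw$. Writing $B_v$ and $B_w$ for the sets of vertices at distance at most $2$ from $v$ and from $w$, respectively, in $G'$, the first step is to show that $B_v$ and $B_w$ are disjoint. Any common vertex $x \in B_v \cap B_w$ would produce a $v$--$w$ walk of length at most $d_{G'}(v,x) + d_{G'}(x,w) \leq 4$ in $G'$; appending the edge $vw$ would yield a cycle of length at most $5$ in $G$, contradicting the girth-at-least-$6$ assumption. Since the walk is contained in $G - vw$, it does not reuse the edge $vw$, so after shortening to a shortest $v$--$w$ path in $G'$ one genuinely obtains a cycle rather than a closed walk.

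Next, I would derive a uniform lower bound $|B_v| \geq \delta^2 - \delta + 1$ (and analogously for $|B_w|$). In $G'$ the vertex $v$ still has degree at least $\delta - 1$, and each $G'$-neighbour $u$ of $v$ satisfies $u \neq w$, so $\deg_{G'}(u) = \deg_G(u) \geq \delta$, contributing at least $\delta - 1$ neighbours of $u$ other than $v$. The girth hypothesis guarantees that these second-level vertices are pairwise distinct and disjoint from $\{v\} \cup N_{G'}(v)$: a coincidence with a first-neighbour would create a triangle, and a shared second-neighbour between two distinct first-neighbours of $v$ would create a $4$-cycle through $v$. Counting,
\[
|B_v| \geq 1 + (\delta - 1) + (\delta - 1)^2 = \delta^2 - \delta + 1.
\]

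Finally, since deleting an edge cannot decrease distances, $B_v \subseteq N_{\leq 2}(v)$ and $B_w \subseteq N_{\leq 2}(w)$, and combining with the disjointness from the first step gives
\[
|N_{\leq 2}(vw)| \geq |B_v \cup B_w| = |B_v| + |B_w| \geq 2(\delta^2 - \delta + 1).
\]
The only part that needs genuine care is the disjointness argument in the first step; the counting in the second step is routine once the girth conditions on triangles and $4$-cycles are invoked, and the passage back from $G'$ to $G$ is immediate from monotonicity of distance under edge deletion.
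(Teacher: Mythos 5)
Your proof is correct and follows exactly the route the paper indicates: the paper does not prove this lemma in full (it cites \cite{AloDan-manu}) but sketches precisely your argument in the preceding paragraph, namely that the balls of radius two around $v$ and $w$ in $G-vw$ are disjoint when the girth is at least $6$. Your filling-in of the details --- the cycle of length at most $5$ from a common vertex, and the Moore-type count $1+(\delta-1)+(\delta-1)^2=\delta^2-\delta+1$ for each ball --- is accurate.
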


It was shown in \cite{AloDan-manu} that if we relax the girth  condition 
to $G$ having
neither $4$-cycles nor $5$-cycles (so triangles are permitted), 
then a slightly weaker bound on $|N_{\leq 2}(u) \cup N_{\leq 2}(w)|$ holds.

\begin{lem}[\cite{AloDan-manu}]  \label{lowC4C5}
Let $G$ be a $(C_{4}, C_{5})$-free graph with minimum degree $\delta \geq 3$. If $v$ and 
$w$ are adjacent vertices of $G$, then 
\[ |N_{\leq 2}(vw)| 
     \geq \left\{ \begin{array}{cc} 
      2\delta^{2} - 5\delta + 5  & \textrm{if $\delta$ is even}, \\  
      2\delta^{2} - 5\delta + 7 & \textrm{if $\delta$ is odd}. 
      \end{array} \right.        \] 
\end{lem}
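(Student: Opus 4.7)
The plan is to partition $N_{\leq 2}(vw)$ into three levels $L_0=\{v,w\}$, $L_1=A\cup B$ with $A=N(v)\setminus\{w\}$ and $B=N(w)\setminus\{v\}$, and $L_2$, and lower-bound each. Since two common neighbors of $v$ and $w$ would close a $C_4$ with the edge $vw$, one has $|A\cap B|\leq 1$, and $|A|,|B|\geq\delta-1$. This splits the argument into the triangle-free case $A\cap B=\emptyset$ and the triangle case $A\cap B=\{z\}$.

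For each $u\in A\cup B$ I would set $N''(u)=N(u)\setminus(\{v,w\}\cup A\cup B)$ and first verify that the sets $\{N''(u):u\in A\cup B\}$ are pairwise disjoint by noting the forbidden short cycle a shared vertex would produce: two $N''$-sets from $A$ share a vertex only via a $C_4$ through $v$, similarly through $w$ for two in $B$, and for $u_1\in A\setminus B$ and $u_2\in B\setminus A$ through a $C_5$ along $v,u_1,\cdot,u_2,w,v$. Hence
\[
|N_{\leq 2}(vw)|\;\geq\;2+|A\cup B|+\sum_{u\in A\cup B}|N''(u)|.
\]

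For each $u\in A\setminus B$, the at least $\delta-1$ neighbors of $u$ other than $v$ cannot lie in $B$ (else a $C_4$ at $vw$), and two of them in $A$ would close a $C_4$ at $v$; thus $t(u):=|N(u)\cap A|\leq 1$, making $G[A\setminus B]$ a matching. Summing,
\[
\sum_{u\in A\setminus B}|N''(u)|\;\geq\;(\delta-2)|A\setminus B|+\bigl(|A\setminus B|\bmod 2\bigr),
\]
and symmetrically for $B\setminus A$. The pivotal step in the triangle case is that $z$ has \emph{no} neighbor in $A\cup B$: any $x\in A\setminus B$ adjacent to $z$ closes the $4$-cycle $v,x,z,w$, and symmetrically on the $B$-side. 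This forces $|N''(z)|\geq\delta-2$.

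Substituting the minimal sizes $|A\setminus B|,|B\setminus A|\geq\delta-2$ in the triangle case (and $\geq\delta-1$ in the triangle-free case), then adding the contributions from $L_0$, $L_1$, and $L_2$, yields the two claimed inequalities; the parity bonus from the matching residue is what differentiates the even- and odd-$\delta$ bounds. The main obstacle is the observation that the common neighbor $z$ is isolated from $A\cup B$: without it the triangle case loses exactly the $+1$ that is needed to match the claim, and verifying pairwise disjointness of the $N''$-sets across all sub-cases requires carefully checking that no short forbidden cycle escapes attention.
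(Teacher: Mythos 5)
The paper does not prove this lemma; it is imported verbatim from the companion manuscript \cite{AloDan-manu}, so there is no in-text proof to compare against. Your argument, however, is correct and complete in all its essentials. I checked the two points on which it hinges: (i) the pairwise disjointness of the sets $N''(u)$ does follow from the forbidden $C_4$ (two vertices of $A$, or two of $B$, with a common second neighbour) and the forbidden $C_5$ (one from $A\setminus B$ and one from $B\setminus A$), and your observation that a vertex of $A\setminus B$ has no neighbour in $B$ and at most one in $A$ correctly makes $G[A\setminus B]$ a matching; (ii) in the triangle case the common neighbour $z$ is indeed isolated from $A\cup B$, contributing $\delta-2$ new vertices. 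The arithmetic closes: with $|A\setminus B|=|B\setminus A|=\delta-2$ the triangle case gives $2+(2\delta-3)+2(\delta-2)^2+(\delta-2)+2\bigl((\delta-2)\bmod 2\bigr)=2\delta^2-5\delta+5+2(\delta\bmod 2)$, exactly the claimed parity-dependent bound, and the triangle-free case gives the stronger $2\delta^2-4\delta+4+{}$parity, which dominates for $\delta\geq 3$ (with equality to the claimed bound at $\delta=3$ odd). One small point worth making explicit in a final write-up: the minimum of $(\delta-1)m+(m\bmod 2)$ over admissible $m$ is attained at the smallest $m$, so substituting the minimal set sizes is legitimate even though the parity term is not monotone.
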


We also require bounds on the number of vertices within distance three of a vertex
of large degree. 

\begin{lem}[\cite{AloDan-manu}]  \label{la:N3}  
Let $G$ be a graph of girth $6$, minimum degree $\delta \geq 3$ and maximum degree
$\Delta$. If $v$ is 
a vertex of degree $\Delta$, then 
\[ |N_{\leq 3}(v)| \geq \Delta \delta + (\delta-1)\sqrt{\Delta(\delta-2)} + \frac{3}{2}. \] 
\end{lem}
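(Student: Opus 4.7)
The plan is to bound the distance shells $N_1(v)$, $N_2(v)$, and $N_3(v)$ separately, using the girth condition to preclude short cycles and thus limit overlaps. First, $|N_1(v)|=\Delta$ by hypothesis. For $N_2(v)$, each $u\in N_1(v)$ has at least $\delta-1$ neighbours other than $v$, all lying in $N_2(v)$ (a neighbour in $N_1(v)$ would give a triangle). Since two distinct vertices of $N_1(v)$ cannot share a neighbour in $N_2(v)$ without creating a $C_4$ through $v$, this gives $|N_2(v)|\ge\Delta(\delta-1)$. Analogous short-cycle arguments show that every $w\in N_2(v)$ has exactly one neighbour in $N_1(v)$ and no neighbour in $N_2(v)$, as any other configuration produces a cycle of length at most $5$ through $v$; consequently $w$ sends at least $\delta-1$ edges into $N_3(v)$.

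The key step is to produce two complementary lower bounds on $|N_3(v)|$. For each $x\in N_3(v)$, let $d^-(x)$ denote the number of neighbours of $x$ in $N_2(v)$, and suppose these neighbours are $w_1,\dots,w_k$ with $k=d^-(x)$. The crucial observation is that the sets $(N(w_i)\cap N_3(v))\setminus\{x\}$ are pairwise disjoint: a common vertex $y$ would yield the $4$-cycle $w_i\,x\,w_j\,y$, contradicting girth $6$. Since each $w_i$ has at least $\delta-2$ neighbours in $N_3(v)\setminus\{x\}$, this yields the pointwise bound
\[
|N_3(v)|\;\geq\;d^-(x)(\delta-2)+1.
\]
On the other hand, double-counting edges between $N_2(v)$ and $N_3(v)$ gives
\[
|N_3(v)|\cdot\max_{x\in N_3(v)}d^-(x)\;\geq\;\sum_{x\in N_3(v)}d^-(x)\;\geq\;(\delta-1)\,|N_2(v)|\;\geq\;\Delta(\delta-1)^2.
\]

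Setting $K=\max_x d^-(x)$ and combining the two inequalities yields $|N_3(v)|\bigl(|N_3(v)|-1\bigr)\geq(\delta-2)\Delta(\delta-1)^2$. Solving this quadratic and using $\sqrt{A^2+\tfrac14}\geq A$ gives $|N_3(v)|\geq\tfrac12+(\delta-1)\sqrt{\Delta(\delta-2)}$; adding $|N_{\le 2}(v)|\geq 1+\Delta\delta$ then produces the claimed bound. I expect the main obstacle to be finding the right disjointness statement in the key step: the naive bound $d^-(x)\le\Delta$, or a direct count of pairs $(u,x)\in N_1(v)\times N_3(v)$ at distance $2$, yields only $|N_3(v)|\geq(\delta-1)^2$, which is weaker than the target whenever $\Delta$ is substantially larger than $\delta^2$. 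Extracting the factor $\sqrt{\Delta(\delta-2)}$ requires invoking the girth condition a second time to convert a single vertex $x$ with large $d^-(x)$ into many \emph{fresh} vertices of $N_3(v)$, and then balancing this structural bound against the edge-count bound through the resulting quadratic.
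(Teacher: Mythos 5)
Your proof is correct: the shell counts $|N_1(v)|=\Delta$ and $|N_2(v)|\geq\Delta(\delta-1)$, the disjointness of the sets $(N(w_i)\cap N_3(v))\setminus\{x\}$ forced by the absence of $4$-cycles, and the balancing of the pointwise bound $|N_3(v)|\geq d^-(x)(\delta-2)+1$ against the edge count $\sum_x d^-(x)\geq(\delta-1)|N_2(v)|$ via the quadratic $m(m-1)\geq(\delta-2)\Delta(\delta-1)^2$ all check out and yield exactly $\Delta\delta+(\delta-1)\sqrt{\Delta(\delta-2)}+\tfrac32$. The paper itself only cites this lemma from the companion manuscript \cite{AloDan-manu} without reproducing a proof, but the precise form of the additive constant $\tfrac32$ and of the term $(\delta-1)\sqrt{\Delta(\delta-2)}$ indicates that your argument is essentially the intended one.
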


\begin{lem}[\cite{AloDan-manu}]  \label{la:N3-for-C4-C5-free}  
Let $G$ be a $(C_4, C_5)$-free graph of minimum degree $\delta \geq 3$ and maximum degree
$\Delta$. If $v$ is 
a vertex of degree $\Delta$, then 
\[ |N_{\leq 3}(v)| \geq \Delta (\delta-1) + (\delta-2)\sqrt{\Delta(\delta-3)} + \frac{3}{2}. \] 
\end{lem}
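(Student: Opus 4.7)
The plan is to partition $N_{\leq 3}(v)$ as $\{v\} \cup N_1(v) \cup N_2(v) \cup N_3(v)$ and lower-bound each piece, paralleling the girth-$6$ argument of Lemma \ref{la:N3} but with each estimate shifted down by one to accommodate the triangles now allowed. For the first layer, I would show $|N_2(v)| \geq \Delta(\delta - 2)$. Since $G$ is $C_4$-free, every $u \in N_1(v)$ has at most one neighbor inside $N_1(v)$ (two such neighbors $u_1, u_2$ would give a $4$-cycle $v u_1 u u_2 v$), hence $|N(u) \cap N_2(v)| \geq \deg(u) - 2 \geq \delta - 2$. Similarly, each $w \in N_2(v)$ has a unique neighbor in $N_1(v)$, as two neighbors $u_1, u_2$ would give a $4$-cycle $v u_1 w u_2 v$; so the sets $N(u) \cap N_2(v)$ partition $N_2(v)$ and $|N_2(v)| \geq \Delta(\delta - 2)$.

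For each $w \in N_2(v)$ with unique $N_1(v)$-neighbor $u$, I would show next that $w$ has at least $\delta - 2$ neighbors in $N_3(v)$. Any neighbor $w'$ of $w$ lying in $N_2(v)$ must itself be adjacent to $u$: otherwise, letting $u'$ be the unique $N_1(v)$-neighbor of $w'$, the vertices $v, u, w, w', u'$ form a $5$-cycle. And inside $N(u) \cap N_2(v)$, $w$ has at most one neighbor, since two neighbors of $w$ there would form a $4$-cycle together with $u$ and $w$. Subtracting $u$ and this one possible $N_2$-neighbor from $\deg(w) \geq \delta$ leaves at least $\delta - 2$ neighbors of $w$ in $N_3(v)$.

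Third, I would bound $|N_3(v)|$ by the standard $C_4$-free pair-counting between $N_2(v)$ and $N_3(v)$: any two vertices of $N_2(v)$ share at most one common neighbor in $N_3(v)$, so
\[
\sum_{w \in N_2(v)} \binom{|N(w) \cap N_3(v)|}{2} \leq \binom{|N_3(v)|}{2}.
\]
Substituting the lower bounds from the previous two steps yields $|N_3(v)|(|N_3(v)| - 1) \geq \Delta (\delta - 2)^2 (\delta - 3)$, which (using $\sqrt{1 + 4A} \geq 2\sqrt{A}$ on the resulting quadratic) gives $|N_3(v)| \geq (\delta - 2)\sqrt{\Delta (\delta - 3)} + \tfrac{1}{2}$. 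Summing the four layers gives $1 + \Delta + \Delta(\delta - 2) + (\delta - 2)\sqrt{\Delta(\delta - 3)} + \tfrac{1}{2} = \Delta(\delta - 1) + (\delta - 2)\sqrt{\Delta(\delta - 3)} + \tfrac{3}{2}$, as desired.

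The main obstacle I anticipate is keeping the two "triangle shifts" straight: one triangle within $N_1(v)$ costs a unit in the $|N(u) \cap N_2(v)|$ estimate, and one triangle of the form $u w w'$ with $w, w' \in N_2(v)$ costs a unit in the $|N(w) \cap N_3(v)|$ estimate. The use of $C_5$-freeness (rather than girth $6$) appears in exactly one place—forcing every $N_2$-neighbor of $w$ to lie in $N(u) \cap N_2(v)$—and once these adjustments are identified, the $C_4$-free pair-counting step is entirely parallel to the argument in Lemma \ref{la:N3}.
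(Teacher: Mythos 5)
The paper itself contains no proof of this lemma---it is imported verbatim from the companion manuscript \cite{AloDan-manu}---so there is no in-paper argument to compare against; I can only judge your proposal on its own terms. On those terms it is correct, and it is the natural layered-counting argument one would expect: the partition into $\{v\}\cup N_1(v)\cup N_2(v)\cup N_3(v)$, the $C_4$-free observations that each $u\in N_1(v)$ has at most one neighbour in $N_1(v)$ and each $w\in N_2(v)$ exactly one in $N_1(v)$, the single use of $C_5$-freeness to force every $N_2$-neighbour of $w$ into $N(u)\cap N_2(v)$, and the pair-counting step are all sound, and the arithmetic $1+\Delta+\Delta(\delta-2)+(\delta-2)\sqrt{\Delta(\delta-3)}+\tfrac12=\Delta(\delta-1)+(\delta-2)\sqrt{\Delta(\delta-3)}+\tfrac32$ checks out; the shifts relative to Lemma \ref{la:N3} are exactly where they should be.

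Two small points to tidy. First, the justification you state for the displayed inequality is the transpose of the one you need: what makes the pairs $\{x,y\}\subseteq N(w)\cap N_3(v)$ distinct across different $w$ is that two vertices of $N_3(v)$ have at most one common neighbour (in $N_2(v)$), not the statement about two vertices of $N_2(v)$; both follow from $C_4$-freeness, so this is cosmetic. Second, when $\delta=3$ the quantity $A=\Delta(\delta-2)^2(\delta-3)$ vanishes, and the inference from $|N_3(v)|(|N_3(v)|-1)\ge A$ to $|N_3(v)|\ge\sqrt{A}+\tfrac12$ breaks down (it does not exclude $|N_3(v)|=0$). This is harmless because your own second step already shows each $w\in N_2(v)$ has at least $\delta-2\ge 1$ neighbours in $N_3(v)$ and $N_2(v)\neq\emptyset$, so $|N_3(v)|\ge 1\ge\tfrac12$; but the degenerate case should be dispatched explicitly rather than left to the quadratic.
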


Let $G$ be a connected graph with a weight function 
$c: V (G) \rightarrow \mathbb{R}^{\geq 0}$. Then the eccentricity of $G$ with respect 
to $c$ is defined by
\[ EX_c(G) = \sum_{v\in V(G)} c(v) e_G(v). \]
If the total weight of the vertices of $G$ is strictly greater than $0$, we define 
the average eccentricity of $G$ with respect to $c$ by
\[   {\rm avec}_c(G) = \frac{\sum_{v\in V(G)} c(v) e_G(v)}{\sum_{v\in V(G)} c(v)}.   \]
We usually  denote the total weight of the vertices of $G$ by $N$. 
Hence, if $N>0$, we have ${\rm avec}_c(G) = \frac{EX_c(G)}{N}$.

\begin{lem}[\cite{DanGodSwa2004}] \label{la:weighted-avec-maximise-by-path}
Let $G$ be a connected, weighted graph with a weight function 
$c; V(G) \rightarrow \mathbb{R}^{\geq 0}$. Let 
$N = \sum_{v\in V(G)} c(v)$. If $c(v) \geq 1$ for all $v\in V(G)$, then
\[ {\rm avec}_c(G) \leq {\rm avec}(P_{\lceil N \rceil}). \] 
\end{lem}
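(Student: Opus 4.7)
I would reduce the lemma to the unweighted Theorem~\ref{theo:path-maximises-avec}. First I would settle the integer-weight case by a \emph{blow-up}: replacing each vertex $v$ of $G$ by a clique of $c(v)$ pairwise adjacent copies, each sharing the neighborhood of $v$ in $G$, produces a graph $\tilde G$ on $N$ vertices in which $d_{\tilde G}(x,y) = d_G(u,v)$ whenever $x,y$ are copies of distinct vertices $u,v$, and two copies of the same vertex are at distance $1$. Provided $n(G) \geq 2$ (the one-vertex case being trivial), every eccentricity in $G$ is at least $1$, so $e_{\tilde G}(x) = e_G(v)$ for each copy $x$ of $v$. Hence ${\rm avec}(\tilde G) = {\rm avec}_c(G)$, and applying Theorem~\ref{theo:path-maximises-avec} to $\tilde G$ yields ${\rm avec}_c(G) \leq {\rm avec}(P_N)$.

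\textbf{Reduction from real to integer weights.} For general real $c(v) \geq 1$, let $v^*$ be a vertex of maximum eccentricity in $G$, set $M = \lceil N \rceil$, and define integer weights $c'$ by $c'(v) = \lfloor c(v) \rfloor$ for $v \neq v^*$ and $c'(v^*) = M - \sum_{v \neq v^*} \lfloor c(v) \rfloor$. A short check shows that $c'(v^*) \geq M - (N - c(v^*)) \geq c(v^*) \geq 1$ is a positive integer and $\sum_v c'(v) = M$, so the integer case applied to $c'$ gives ${\rm avec}_{c'}(G) \leq {\rm avec}(P_M)$. It then suffices to prove ${\rm avec}_c(G) \leq {\rm avec}_{c'}(G)$. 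Writing $\{x\} = x - \lfloor x \rfloor$ for the fractional part, a direct calculation gives
\[
 \sum_v \bigl(c'(v) - c(v)\bigr)\, e_G(v) = (M - N)\, e_G(v^*) + \sum_{v \neq v^*} \{c(v)\}\,\bigl(e_G(v^*) - e_G(v)\bigr),
\]
both of whose terms are non-negative. Since $e_G(v^*) \geq {\rm avec}_c(G)$ (the maximum eccentricity dominates any weighted average of eccentricities), the identity
\[
 {\rm avec}_{c'}(G) - {\rm avec}_c(G) = \frac{1}{M}\Bigl[\sum_v \bigl(c'(v) - c(v)\bigr)\, e_G(v) - (M - N)\,{\rm avec}_c(G)\Bigr]
\]
shows that ${\rm avec}_{c'}(G) \geq {\rm avec}_c(G)$, completing the proof.

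\textbf{Main obstacle.} The subtle point is the last step: because the totals $\sum c'(v) = M$ and $\sum c(v) = N$ differ, the bound $\sum_v c'(v)\,e_G(v) \geq \sum_v c(v)\,e_G(v)$ does not by itself imply ${\rm avec}_{c'}(G) \geq {\rm avec}_c(G)$. The construction of $c'$ is engineered so that the entire excess weight $M - N$ sits at a vertex whose eccentricity is at least the current weighted average, which is precisely what is needed to make the change of denominators harmless.
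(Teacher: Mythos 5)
Your proof is correct, but note that the paper itself does not prove this lemma at all: it is imported verbatim from \cite{DanGodSwa2004}, so there is no in-paper argument to compare against. What you have produced is a self-contained derivation of the weighted statement from the unweighted Theorem~\ref{theo:path-maximises-avec} (which the paper also only quotes from the same source). Both halves of your argument check out. The clique blow-up does give $d_{\tilde G}(x,y)=d_G(u,v)$ for copies of distinct vertices and distance $1$ between copies of the same vertex, so $e_{\tilde G}(x)=\max(1,e_G(v))=e_G(v)$ once $n(G)\geq 2$, and hence ${\rm avec}(\tilde G)={\rm avec}_c(G)$; applying Theorem~\ref{theo:path-maximises-avec} to $\tilde G$ settles integer weights. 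In the rounding step, your $c'$ is integral, satisfies $c'(v)\geq 1$ everywhere, sums to $M=\lceil N\rceil$, and the identity
\[
{\rm avec}_{c'}(G)-{\rm avec}_c(G)=\frac{1}{M}\Bigl[(M-N)\bigl(e_G(v^*)-{\rm avec}_c(G)\bigr)+\sum_{v\neq v^*}\{c(v)\}\bigl(e_G(v^*)-e_G(v)\bigr)\Bigr]
\]
exhibits the difference as a sum of non-negative terms; you correctly identified and resolved the one genuinely delicate point, namely that increasing the numerator alone does not suffice when the total weight changes from $N$ to $M$. The original proof in \cite{DanGodSwa2004} proceeds differently, working within the weighted setting (reducing to spanning trees and then to paths by exchange-type arguments) rather than reducing to the unweighted extremal theorem; your route has the advantage of reusing Theorem~\ref{theo:path-maximises-avec} as a black box, at the cost of the blow-up construction and the floor/ceiling bookkeeping. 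Either way, your argument is a valid substitute for the citation.
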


\section{Bounds in terms of order and minimum degree}

In this section we present the first two of our main results: upper bounds on the average eccentricity 
of graphs of girth at least six and of $(C_4, C_5)$-free graphs in terms of
order and minimum degree. The basic proof strategy follows that used in \cite{DanMukOsaRod2019}.

\begin{theorem} \label{theo:avec-mindegree-girth6}
Let $G$ be a connected graph of order $n$, minimum degree $\delta \geq 3$ and 
girth at least $6$. Then
\[ {\rm avec}(G) \leq 
  \frac{9}{2} \Big\lceil \frac{n}{\delta^*} \Big\rceil + 8, \]
where $\delta^* = 2\delta^2-2\delta+2$.     
\end{theorem}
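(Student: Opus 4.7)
The plan is to follow the strategy used in \cite{DanMukOsaRod2019} for triangle-free graphs, adapted to the girth-$\geq 6$ setting by invoking Lemma~\ref{la:neighbourhood-in-C4-free-bipartite} in place of the corresponding density lemma for triangle-free graphs. First I would let $d={\rm diam}(G)$, pick $v_0$ with $e_G(v_0)=d$, and fix a diametral shortest path $P: v_0 v_1 \cdots v_d$. Define the edges $f_i=v_{6i}v_{6i+1}$ for $i=0,1,\ldots,k$, where $k=\lfloor(d-1)/6\rfloor$. Since $P$ is a shortest path, the closest endpoints of any two distinct $f_i, f_j$ are at distance exactly $6|i-j|-1\geq 5$, and hence a common vertex in $N_{\leq 2}(f_i)\cap N_{\leq 2}(f_j)$ would force $d_G(f_i,f_j)\leq 4$, a contradiction; the sets $N_{\leq 2}(f_i)$ are therefore pairwise disjoint. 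Lemma~\ref{la:neighbourhood-in-C4-free-bipartite} then yields $|N_{\leq 2}(f_i)|\geq \delta^*$, whence $(k+1)\delta^*\leq n$, so $k+1\leq \lceil n/\delta^*\rceil$ and $d\leq 6\lceil n/\delta^*\rceil + O(1)$.

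Next I would partition $V(G)$ into bands $B_0,B_1,\ldots,B_k$ with $N_{\leq 2}(f_i)\subseteq B_i$. A natural choice is to group the BFS layers $L_j=\{v:d_G(v_0,v)=j\}$ around each backbone edge by placing $L_{6i-2}\cup\cdots\cup L_{6i+3}$ into $B_i$ (absorbing truncated layers into the extreme bands $B_0$ and $B_k$); since $N_{\leq 2}(f_i)\subseteq L_{6i-2}\cup\cdots\cup L_{6i+3}$ this still guarantees $|B_i|\geq \delta^*$ for every $i$. The central technical step is to show the pointwise eccentricity estimate
\[
e_G(v)\leq 6\max(i,k-i)+C\qquad\text{for every }v\in B_i,
\]
with some absolute constant $C$. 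To establish this, one takes an eccentric vertex $w$ of $v$, locates it in some band $B_j$, and routes a $v$-to-$w$ walk through the backbone as $v\to f_i\to f_j\to w$: the middle leg along $P$ has length $6|i-j|-1$, while the detours from $v$ and $w$ to the backbone have length $O(1)$ owing to the density forced by Lemma~\ref{la:neighbourhood-in-C4-free-bipartite}.

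Given the pointwise bound, the rest is a reduction to a weighted path. Let $H$ be the path on the $k+1$ positions $0,1,\ldots,k$ with weight function $c(i)=|B_i|/\delta^*$, so $c(i)\geq 1$ and the total weight is $N=n/\delta^*$. Because $e_H(i)=\max(i,k-i)$ and $\sum_i |B_i|=n=\delta^* N$,
\[
{\rm avec}(G)\leq \frac{1}{n}\sum_i |B_i|\bigl(6\,e_H(i)+C\bigr) = 6\,{\rm avec}_c(H)+C.
\]
Lemma~\ref{la:weighted-avec-maximise-by-path} bounds ${\rm avec}_c(H)\leq {\rm avec}(P_{\lceil n/\delta^*\rceil})\leq \tfrac{3}{4}\lceil n/\delta^*\rceil+O(1)$; multiplying by $6$ gives ${\rm avec}(G)\leq \tfrac{9}{2}\lceil n/\delta^*\rceil+O(1)$, and the residual $O(1)$ slack collapses into the $+8$ in the statement.

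The step that I expect to need the most care is the pointwise eccentricity estimate. The inequality $e_G(v)\leq \max(d_G(v,v_0),d_G(v,v_d))+O(1)$ is simply false in general graphs, so the proof must genuinely use the girth hypothesis: high girth forbids the short cycles that would allow a vertex to sit far from the backbone $P$ while still possessing a long eccentric walk, and Lemma~\ref{la:neighbourhood-in-C4-free-bipartite} controls any wide ``excursion'' of $G$ away from $P$ by forcing another disjoint $N_{\leq 2}$-shell of size $\delta^*$, in contradiction with the packing $(k+1)\delta^*\leq n$. Once $P$ is shown to dominate distances in $G$ up to $O(1)$, the remaining bookkeeping for the band endpoints $B_0$ and $B_k$ and for any residual vertices is routine and is absorbed by the final additive $8$.
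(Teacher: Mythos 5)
There is a genuine gap, and it sits exactly where you predicted: the pointwise estimate $e_G(v)\leq 6\max(i,k-i)+C$ for $v\in B_i$ is false, and the mechanism you offer to rescue it does not work. The bands $B_i$ are defined by distance from $v_0$, which controls $d_G(v,v_0)$ but says nothing about the distance from $v$ to the backbone $P$. The packing inequality $(k+1)\delta^*\leq n$ only bounds $k$ from above; it does not force $n$ to be close to $(k+1)\delta^*$, so there is no contradiction in having many vertices (indeed whole long appendages) far from $P$. Concretely, take three chains of girth-$6$ blocks (as in the construction of Theorem \ref{theo:sharpness-example-for-bip-C4-free-avec-mindegree}) of equal length $L$ glued at a common centre: the diametral path runs through two of the legs, the tip $v$ of the third leg lies at distance $L$ from $v_0$ and hence in the middle band $i\approx k/2$, your bound would give $e_G(v)\leq 3k+C\approx L+C$, yet $e_G(v)=2L$. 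Both ``detours'' in your routing $v\to f_i\to f_j\to w$ can have length $\Theta({\rm diam}(G))$, not $O(1)$. What survives of your argument is only the diameter bound $d\leq 6\lceil n/\delta^*\rceil+O(1)$, which yields ${\rm avec}(G)\leq 6\lceil n/\delta^*\rceil+O(1)$ --- the right order but with coefficient $6$ instead of $\tfrac92$.

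The paper's proof is structured precisely to avoid this trap. Instead of packing edges along one diametral path, it greedily builds a maximal set $M$ of pairwise distant edges spread over the \emph{whole} graph, so that by construction every vertex is within distance $5$ of $V(M)$; the $O(1)$ ``detour to the backbone'' then holds by fiat. The price is that the auxiliary structure carrying the weights is no longer a path but the connected graph $L^6[M]$ (a power of the line graph of a spanning tree restricted to $M$), and this is exactly why Lemma \ref{la:weighted-avec-maximise-by-path} is stated for an \emph{arbitrary} connected weighted graph rather than for a weighted path: it converts the weighted average eccentricity of $L^6[M]$ into that of $P_{\lceil n/\delta^*\rceil}$, giving the factor $\tfrac34\lceil n/\delta^*\rceil$ which, multiplied by the stretch factor $6$, produces the coefficient $\tfrac92$. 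If you want to repair your argument, replace the diametral-path backbone by such a maximal scattered packing and drop the insistence that the quotient structure be a path.
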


\begin{proof}
We first find a matching $M$ of $G$ as follows. 
Choose an arbitrary edge $e_1$ of $G$ and let $M= \{e_1\}$. 
If there exists an edge $e_2$ of $G$ at distance exactly $5$ from $M$, then let 
$M=\{e_1, e_2\}$.  If there exists an edge $e_3$ at distance $5$ from $M$ then
let $M=\{e_1, e_2, e_3\}$. 
Repeat this step, i.e., successively add edges at distance $5$ from $M$  
until, after $k$ steps say, each edge of $G$ is within distance $4$ of $M$.  
Let $M=\{e_1, e_2,\ldots, e_k\}$. 

The sets $N_{\leq 2}(e_i)$ are pairwise disjoint for $i=1,2,\ldots,k$. 
For $i=1,2,\ldots, k$ let $T(e_i)$ be a spanning tree of $N_{\leq 2}(e_i)$ that 
contains $e_i$ and preserves the distances to $e_i$. Since the sets 
$N_{\leq 2}(e_i)$ are pairwise disjoint, the trees $T(e_i)$ are vertex disjoint,
so the union $\bigcup_{i=1}^k T(e_i)$ forms a subforest $T_1$ of $G$.  
It follows from the construction of $M$ that for every $i \in \{2,3,\ldots,k\}$
there exists an edge $f_i$ in $G$ joining a vertex in $T(e_i)$ to a vertex in $T(e_j)$
for some $j$ with $1 \leq j <i$.  Hence $T_2:=T_1 + \{f_2, f_3,\ldots, f_k\}$ is a subtree
of $G$. Now every vertex of $G$ is within distance $5$ from some vertex of $V(M)$. 
Hence we can extend $T_2$ to a spanning tree $T$ of $G$ that preserves the distances 
to a nearest vertex in $V(M)$. 
Since the average eccentricity of any spanning tree of $G$ is not less than 
the average eccentricity of $G$, it suffices to show that
\begin{equation} \label{eq:bound-on-avec-T} 
{\rm avec}(T) \leq 
  \frac{9}{2} \Big\lceil \frac{n}{\delta^*} \Big\rceil + 8. 
\end{equation}  
For every vertex $u \in V (T)$ let $u_M$ be a vertex in $V(M)$ closest to $u$ in $T$. 
The tree $T$ can be thought of as a weighted tree, where each vertex has weight exactly $1$. 
Informally speaking, we now move the weight of every vertex to the closest vertex in 
$V(M)$. More precisely, we define a
weight function $c: V(T) \rightarrow \mathbb{R}^{\geq 0}$ by 
\[ c(v) = | \{u \in V(T) \ | \ u_M = v\}|. \]
Since $d_T(x, x_M) \leq 5$ for all $x\in V(G)$, we have
\begin{eqnarray}
|{\rm avec}_c(T) - {\rm avec}(T)| 
   & = & \big| \frac{1}{n} \sum_{u\in V(T)} c(u) e_T(u) - \frac{1}{n}\sum_{v\in V(T)} e_T(v) \big| 
                  \nonumber \\
   & = & \big| \frac{1}{n}\sum_{v\in V(T)} e_T(v_M) - \frac{1}{n}\sum_{v\in V(T)} e_T(v) \big| 
       \nonumber \\
   & \leq & \frac{1}{n} \sum_{v\in V(T)} \big| e_T(v_M) - e_T(v) \big| 
        \nonumber \\
   &\leq & \frac{1}{n}\sum_{v \in V(T)} d_T(v_M,v)  \nonumber \\
   & \leq & 5.  \label{eq:avec(T)-vs-avec_c(T)}
\end{eqnarray}
Note that $c(u) = 0$ if $u \notin V(M)$ and
$\sum_{v\in V(G)} c(v) =n$, where $n$ is the order of $G$.
We consider the line graph $L$ of $T$ and define a new weight function 
$\overline{c}$ on $V(L) = E(T)$ by
\[ \overline{c}(uv) = \left\{ \begin{array}{cc}
 c(u) + c(v) & \textrm{if $uv\in M$,} \\
 0 & \textrm{if $uv \notin M$}. 
\end{array} \right. \]
Let $uv \in M$. For each vertex $x \in N_{\leq 2}(uv)$, we have 
$x_M \in \{u, v\}$. Hence, by Lemma \ref{la:neighbourhood-in-C4-free-bipartite} 
it follows that for all $uv \in M$,  
\begin{equation}   \label{eq:lower-bound-on-N2(e)-girth6}
\overline{c}(uv) = c(u) + c(v) \geq |N_{\leq 2}(uv)|
                  \geq \delta^*. 
\end{equation}
We now bound the difference between ${\rm avec}_c(T)$ and ${\rm avec}_{\overline{c}}(L)$.  
If $x$ and $y$ are vertices of $T$, and $e_x, e_y$ are edges of $T$ 
incident with $x$ and $y$, respectively, then it is easy to prove that  
$d_T(x, y) \leq  d_L(e_x, e_y) + 1$ and consequently 
$e_T(x) \leq e_L(e_x)+1$. Since the weight of $c$ is concentrated entirely 
in the vertices in $V(M)$, we have 
\begin{eqnarray*}
\sum_{v \in V(T)} c(v) e_T(v) 
 & = & \sum_{uv \in M} c(u)e_T(u) + c(v) e_T(v) \\
 & \leq & \sum_{uv \in M} \overline{c}(uv) (e_L(uv)+1) \\
 & = & \big(\sum_{uv \in M} \overline{c}(uv) e_L(uv) \big) + n. 
\end{eqnarray*}
Division by $n$ now yields 
\begin{equation} \label{eq:avecT-vs-avecL}
{\rm avec}_c(T) \leq {\rm avec}_{\overline{c}}(L) + 1. 
\end{equation}
Now, if the distance $d_T(e_i, e_j)$ between two matching edges $e_i, e_j \in M$ 
equals five, then $d_L(e_1, e_2) \leq 6$. By the construction of $M$, every 
edge $e_i \in M$ with
$i>1$  is thus adjacent in $L^6$ to an edge $e_j \in M$ with $j<i$. 
It follows that $L^6[M]$ is connected. Moreover,  we have for all pairs $e, f \in  M$
that  
\[ d_L(e,f) \leq 6d_{L^6[M]}(e,f). \] 
Now, for every edge $e$ of $T$ there exists an edge $f \in M$ such that 
$d_L(e, f) \leq 5$. It follows that for every $f \in M$ we have 
\[
e_L(f) \leq 6e_{L^6[M]}(f) + 5,
\]
and thus
\begin{equation}  \label{eq:avec(L)-vs-avec(L6)}
{\rm avec}_{\overline{c}}(L) \leq    6 {\rm avec}_{\overline{c}}(L^6[M])  +5.
\end{equation}
To normalise the weights of the vertices of $L^6[M]$, we now define the new weight
function $\overline{c}'$ by 
$\overline{c}'(e) = \frac{\overline{c}(e)}{\delta^*}$ 
for all $e\in M$. Clearly, 
\begin{equation}   \label{eq:avec-equal-for-both-weights}
{\rm avec}_{\overline{c}'}(G) 
  = \frac{\sum_{v \in V(T)} \overline{c}'(v) e_{L^[M}}{\sum_{v \in V(T)} \overline{c}'(v)} 
  = \frac{\sum_{v \in V(T)} \overline{c}(v) e_{L^[M}}{\sum_{v \in V(T)} \overline{c}(v)}  
  = {\rm avec}_{\overline{c}}(G)
\end{equation}
Observe that $\overline{c}'(e) \geq 1$ for all $e\in M$ 
by \eqref{eq:lower-bound-on-N2(e)-girth6} and
that $\sum_{v \in V(T)} \overline{c}'(v) = \frac{n}{\delta^*}$. 
We thus have by Lemma \ref{la:weighted-avec-maximise-by-path} 
\begin{equation}   \label{eq:bound-on-avec-L6[m]}
{\rm ave}_{\overline{c}'}(L^6[M]) 
     \leq \frac{3}{4} \Big\lceil \frac{n}{\delta^*} \Big\rceil  - \frac{1}{2}.
\end{equation}
From \eqref{eq:avec(T)-vs-avec_c(T)}, \eqref{eq:avecT-vs-avecL}, 
\eqref{eq:avec(L)-vs-avec(L6)}, \eqref{eq:avec-equal-for-both-weights} and
\eqref{eq:bound-on-avec-L6[m]} we obtain
\begin{eqnarray*}
{\rm avec}(T) & \leq & {\rm avec}_c(T)+5 \\
  & \leq & {\rm avec}_{\overline{c}}(L) + 6 \\
  & \leq & 6 \; {\rm avec}_{\overline{c}}(L^6[M]) + 11 \\
  & \leq  &  6\Big( \frac{3}{4} \Big\lceil \frac{n}{\delta^*} \Big\rceil  
    - \frac{1}{2} \Big) + 11 \\ 
  & = &  \frac{9}{2} \Big\lceil \frac{n}{\delta^*} \Big\rceil + 8,
\end{eqnarray*}
which is \eqref{eq:bound-on-avec-T}, as desired. 
\end{proof}

We now show that the bound in 
Theorem \ref{theo:avec-mindegree-girth6} is sharp apart from an additive
constant whenever $\delta-1$ is a prime power. This holds even if we restrict ourselves to a  
subclass of graphs of girth at least six, to $C_4$-free bipartite graphs.

\begin{theorem}\label{theo:sharpness-example-for-bip-C4-free-avec-mindegree}
Let $\delta \in \mathbb{N}$ such that  $\delta - 1 $ is a prime power. 
Then there exists an infinite family of bipartite $C_4$-free graphs $G$ of order $n$ and minimum degree $\delta$ such that 
\[
  {\rm avec}(G) \geq  \frac{9n}{2\delta^*} - 5. 
\]
where $\delta^* = 2\delta^2 - 2\delta + 2$.
\end{theorem}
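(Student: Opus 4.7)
The plan is to construct, for each sufficiently large $k$, a bipartite $C_4$-free graph $G_k$ of minimum degree $\delta$ and order $n = k\delta^*$ whose average eccentricity is at least $\tfrac{9k}{2} - 5$. The building block is the incidence graph $H$ of $\mathrm{PG}(2, \delta-1)$, which is $\delta$-regular, bipartite, of girth $6$, diameter $3$, and order exactly $\delta^*$. A crucial preliminary observation is that if $e^* = p^*\ell^*$ is any incident point-line edge of $H$, then $d_{H - e^*}(p^*, \ell^*) = 5$: any second line through $p^*$ meets $\ell^*$ only at $p^*$ (two lines share a unique point), which rules out a length-$3$ alternative path, while length-$5$ paths are plentiful and bipartiteness forces odd distance.

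Construction of $G_k$. Take $k$ vertex-disjoint copies $H^1, \ldots, H^k$ of $H$. In each interior copy ($2 \leq i \leq k-1$) select an incident pair $(p_i^*, \ell_i^*)$ and delete the edge $e_i^* = p_i^*\ell_i^*$; in the end blocks select only $\ell_1^* \in L^1$ and $p_k^* \in P^k$. Finally add the \emph{bridge} edges $\ell_i^*p_{i+1}^*$ for $i = 1, \ldots, k-1$. One verifies: $G_k$ has order $k\delta^*$; it is bipartite (all points on one side, all lines on the other); it is $C_4$-free (each block is $C_4$-free and the bridges, being cut edges, lie on no cycle); and its minimum degree is $\delta$, because each endpoint of a deleted edge regains its lost edge from a bridge.

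Distance estimates. Since the block tree of $G_k$ is a path, the unique geodesic between distinct blocks traverses the intermediate bridges, so using the fact that $d_{H^i - e_i^*}(p_i^*, \ell_i^*) = 5$ and each bridge has length one, a direct bookkeeping shows that for any $w \in H^i$ with $2 \leq i \leq k-1$,
\[
 d_{G_k}(w, \ell_1^*) = d_{H^i}(w, p_i^*) + 6(i-1) - 5, \qquad d_{G_k}(w, p_k^*) = d_{H^i}(w, \ell_i^*) + 6(k-i) - 5.
\]
Because $H$ has diameter $3$ and the end blocks are unmodified, for $k$ large enough the vertex farthest from $w$ lies in $H^1$ or $H^k$, so
\[
 \mathrm{ecc}(w) = \max\bigl(d_{H^i}(w, p_i^*) + 6i - 8,\ d_{H^i}(w, \ell_i^*) + 6(k-i) - 2\bigr),
\]
and vertices in the end blocks are handled analogously.

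Summation. Summing eccentricities block by block, the dominant contribution is $\delta^* \sum_{i=1}^k \max(6i-8,\, 6(k-i)-2)$, which a quick computation shows equals $\tfrac{9\delta^* k^2}{2} + O(\delta^* k)$. The within-block corrections --- the sums of $d_{H^i - e_i^*}(w, p_i^*)$ and $d_{H^i - e_i^*}(w, \ell_i^*)$ over $w \in H^i$ --- are each $O(\delta^2) = O(\delta^*)$ per block, so they contribute only a bounded additive constant to the average. Dividing the total by $n = k\delta^*$ yields $\mathrm{avec}(G_k) \geq \tfrac{9k}{2} - 5 = \tfrac{9n}{2\delta^*} - 5$. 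The main obstacle will be the careful distance bookkeeping, particularly (i)~establishing $d_{H - e^*}(p^*, \ell^*) = 5$ cleanly, and (ii)~confirming that the farthest vertex from $w$ really always lies in an end block and that the lower-order within-block sums leave enough slack to be absorbed into the allowed additive $-5$.
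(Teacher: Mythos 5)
Your construction is exactly the paper's: the same building block (the point--line incidence graph of $\mathrm{PG}(2,\delta-1)$, i.e.\ Reiman's graph $H_q$ with $q=\delta-1$), the same chain of $k$ copies with one incidence edge deleted in each interior copy and bridges added between consecutive copies, and the same block-by-block eccentricity lower bounds $\max(6i-8,\,6(k-i)-2)$ summing to $\delta^*(\tfrac92 k^2-5k)$. The argument is correct and essentially identical to the paper's proof.
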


\begin{proof}
Given $\delta$, let $q=\delta-1$. Then $q$ is a prime power. Our 
construction is based on the graph $H_q$ 
first constructed by Reimann \cite{Rei1958}.
Let $GF(q)$ be the finite field of order $q$. 
Consider the $3$-dimensional vector space $GF(q)^3$, i.e., the set of all triples of
elements of $GF(q)^3$. For $i=1,2$ let $V_i$ be the set of all $i$-dimensional subspaces
of $GF(q)^3$. Now $H_q$ is defined as the bipartite graph with partite sets 
$V_1$ and $V_2$, where two vertices $v_1\in V_1$ and $v_2 \in V_2$ are adjacent if
and only if $v_1$ is a subspace of $v_2$. 
It is easy to verify that $H_{q}$ has 
$2(q^2+q+1)$ vertices, has diameter three, is $(q+1)$-regular, and that 
$H_{q}$ does not contain any $4$-cycles. 

Let $\ell \in \mathbb{N}$ with $\ell$ even, and let $uv$ be an edge of $H_{q}$. 
Let $H^{1}$ and $H^{\ell}$ be disjoint copies of $H_{q}$, and let 
$H^{2}, H^{3}, \ldots , H^{\ell-1}$ be disjoint copies of $H_{q}-uv$. 
Let $G_{\delta,\ell}$ be the graph obtained 
from the union of $H^{1}, H^{2}, \ldots H^{\ell}$ 
by adding the edges $v^{(t)}u^{(t+1)}$ for every 
$t \in \{1,2,\ldots,\ell-1\}$ where $u^{(t)}$ and $v^{(t)}$ are the vertices of 
$H_{t}$ corresponding to the  vertices $u$ and $v$, respectively, of $H_{q}$. 
Clearly, $G_{\delta,\ell}$ is bipartite and $C_4$-free, so its girth is at
least six. Its minimum degree is $\delta$. 
Since $\delta=q+1$, the order $n$ of $G_{\delta, \ell}$ is 
\[
n = 2\ell (q^{2} + q + 1) 
= 2\ell (\delta^{2} - \delta + 1) 
= \ell \delta^*.
\]
In order to bound the average eccentricity of $G_{\delta,\ell}$ from below, 
choose vertices $u^*$ of $H^1$ and $v^*$ of $H^{\ell}$ with 
$d(u^*,v^1)=d(u^{\ell},v^*)=3$. 
Since $H_{q}$ has girth at least $6$, the distance between
$u^{(i)}$ and $v^{(i)}$ in $H^i$ is at least $5$ for $i=2,3,\ldots,\ell-1$. It is easy 
to verify that in fact ${\rm diam}(H^i)=5$ for $i=2,3,\ldots,\ell-1$. Hence  
${\rm diam}(G^{*}_{\delta,\ell}) =d(u^*,v^*) =  6\ell - 5 
= \frac{3n}{\delta^{2} - \delta + 1} -  5$.
If $w \in V(H^i)$, then 
$ e(w) = d(w,v^*) \geq d(v^i,v^*) = 6(\ell-i)-2$ if $i \leq \frac{\ell}{2}$, and 
$ e(w) = d(w,u^*) \geq d(u^i,v^*) = 6(i-1)-2$ if $i > \frac{\ell}{2}$.
Hence
\begin{eqnarray*}
EX(G_{\delta,\ell}) & = & \sum_{i=1}^{\ell/2} \sum_{w\in V(H^i)} e(w) 
    + \sum_{i=\ell/2 +1}^{\ell} \sum_{w\in V(H^i)} e(w) \\
    & \geq & \sum_{i=1}^{\ell/2} \delta^* \big[ 6(\ell-i)-2 \big] 
             + \sum_{i=\ell/2 + 1}^{\ell} \delta^* \big[ 6(i-1)-2 \big] \\
   & = & {\delta}^* (\frac{9}{2}\ell^2-5\ell).
\end{eqnarray*}
Since $n= \ell\delta^*$, division by $n$ yields that
\[ {\rm avec}(G_{\delta,\ell}) \geq   
    \frac{\delta^* (\frac{9}{2}\ell^2-5\ell)}{\ell \delta^* }
    = \frac{9}{2}\ell - 5
    =\frac{9n}{2\delta^*} - 5, \]
as desired.     
\end{proof} 

If we relax the condition on $G$ to have girth
at least six to $G$ being $(C_4, C_5)$-free, we obtain a bound very similar to
Theorems \ref{theo:avec-mindegree-girth6}. 
We omit the proof as it is almost identical to that of 
Theorems \ref{theo:avec-mindegree-girth6}.

\begin{theorem} \label{theo:avec-mindegree-C4-C5-free}
Let $G$ be a connected $(C_4, C_5)$-free graph of order $n$ and minimum 
degree $\delta \geq 3$. Then
\[ {\rm avec}(G) \leq 
  \frac{9}{2} \Big\lceil \frac{n}{\delta^\circ } \Big\rceil + 8, \]
where $\delta^\circ = 2\delta^2-5\delta+5$ if $\delta$ is even, and 
$\delta^\circ = 2\delta^2-5\delta+7$ if $\delta$ is odd.      
\end{theorem}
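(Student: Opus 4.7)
The plan is to repeat the argument of Theorem~\ref{theo:avec-mindegree-girth6} almost verbatim, substituting Lemma~\ref{lowC4C5} for Lemma~\ref{la:neighbourhood-in-C4-free-bipartite} at the one step where the girth hypothesis enters. The reason is that every structural step in the previous proof relies only on $G$ being connected; the girth assumption is invoked exclusively to lower bound $|N_{\leq 2}(uv)|$ for an edge $uv$ of the matching.

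First I would build a matching $M=\{e_1,\dots,e_k\}$ greedily: start with an arbitrary edge $e_1$ and successively adjoin an edge at distance exactly $5$ from the current matching until every edge of $G$ lies within distance $4$ of $M$. As before, the sets $N_{\leq 2}(e_i)$ are pairwise disjoint, and I would form a spanning tree $T$ of $G$ by taking a BFS subtree $T(e_i)$ inside each $N_{\leq 2}(e_i)$, linking these subtrees by the edges $f_i$ produced during the greedy construction, and finally extending to a spanning tree of $G$ that preserves distances to a nearest vertex in $V(M)$.

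Next I would define the vertex weight $c(v)=|\{u\in V(T):u_M=v\}|$ on $T$, and the induced edge weight $\overline{c}(uv)=c(u)+c(v)$ if $uv\in M$ and $\overline{c}(uv)=0$ otherwise, on the line graph $L$ of $T$. The chain of estimates
\[
{\rm avec}(T)\leq {\rm avec}_c(T)+5\leq {\rm avec}_{\overline{c}}(L)+6\leq 6\,{\rm avec}_{\overline{c}}(L^6[M])+11
\]
is established exactly as in \eqref{eq:avec(T)-vs-avec_c(T)}--\eqref{eq:avec(L)-vs-avec(L6)} and does not use the girth condition. The sole place where the hypothesis enters is the lower bound $\overline{c}(uv)\geq |N_{\leq 2}(uv)|$ for $uv\in M$: since $G$ is $(C_4,C_5)$-free with $\delta\geq 3$, applying Lemma~\ref{lowC4C5} in place of Lemma~\ref{la:neighbourhood-in-C4-free-bipartite} gives $\overline{c}(uv)\geq \delta^\circ$, where $\delta^\circ$ is defined in the statement.

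With this bound in hand, I would renormalise by setting $\overline{c}'(e)=\overline{c}(e)/\delta^\circ$, so that $\overline{c}'(e)\geq 1$ for every $e\in M$ and $\sum_{e\in M}\overline{c}'(e)=n/\delta^\circ$. Lemma~\ref{la:weighted-avec-maximise-by-path} applied to $L^6[M]$ then yields
\[
{\rm avec}_{\overline{c}'}(L^6[M])\leq \frac{3}{4}\Big\lceil \frac{n}{\delta^\circ}\Big\rceil-\frac{1}{2},
\]
and feeding this back through the chain of inequalities produces the claimed bound $\frac{9}{2}\lceil n/\delta^\circ\rceil+8$. The only point requiring care is to confirm that Lemma~\ref{lowC4C5} is applicable, which is guaranteed by the hypothesis $\delta\geq 3$; there is no genuinely new obstacle beyond this substitution.
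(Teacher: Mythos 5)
Your proposal is correct and is exactly the argument the paper intends: the paper omits the proof of Theorem~\ref{theo:avec-mindegree-C4-C5-free}, stating it is almost identical to that of Theorem~\ref{theo:avec-mindegree-girth6}, and you have correctly identified that the girth hypothesis enters only through the lower bound on $|N_{\leq 2}(uv)|$ for matching edges, where Lemma~\ref{lowC4C5} substitutes for Lemma~\ref{la:neighbourhood-in-C4-free-bipartite} to give $\overline{c}(uv)\geq\delta^\circ$. The remaining chain of inequalities and the final arithmetic $6\bigl(\frac{3}{4}\lceil n/\delta^\circ\rceil-\frac{1}{2}\bigr)+11=\frac{9}{2}\lceil n/\delta^\circ\rceil+8$ check out.
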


We do not know if the bound in Theorem \ref{theo:avec-mindegree-C4-C5-free} 
is sharp. But since $\lim_{\delta \rightarrow \infty} \frac{\delta^*}{\delta^{\circ}} = 1$,
it is clear from 
Theorem \ref{theo:sharpness-example-for-bip-C4-free-avec-mindegree} that for 
large $\delta$ the coefficient of $n$ in the bound is close to being optimal.

\section{Bounds in terms of order, minimum degree and maximum degree}

We now show that the bound in Theorem \ref{theo:avec-mindegree-girth6} can 
be improved if $G$ contains a 
vertex of large degree. The proof of this bound follows broadly that of
Theorem \ref{theo:avec-mindegree-girth6}, and also borrows ideas from
\cite{DanOsa-manu}, but several modifications and 
additional arguments are required. 

\begin{theorem} \label{theo:avec-mindegree-maxdegree-girth6}
Let $G$ be a connected graph of order $n$, minimum degree $\delta \geq 3$,
maximum degree at least $\Delta$ and 
girth at least $6$. Then 
\[ {\rm avec}(G) \leq \frac{n-\Delta^*}{2\delta^*} \,  \frac{9n+3\Delta^*}{n} + 21, \]
where $\delta^*=2(\delta^2-\delta+1)$ and 
$\Delta^*= \Delta \delta + (\delta-1)\sqrt{\Delta(\delta-2)} +\frac{3}{2}$.  
\end{theorem}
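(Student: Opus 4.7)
The plan is to adapt the proof of Theorem~\ref{theo:avec-mindegree-girth6} so as to exploit the large ball $N_{\leq 3}(v^*)$ of size at least $\Delta^*$ guaranteed by Lemma~\ref{la:N3} around a vertex $v^*$ of maximum degree. This ball will be arranged to contribute a heavy weight to a single ``initial'' matching edge $e_1 = u^*v^*$, producing a heavy vertex in the auxiliary weighted graph $L^6[M]$ and a corresponding improvement in the final path estimate.

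Fix $v^*$ of degree $\Delta$ and an incident edge $e_1 = u^*v^*$, and build a matching $M = \{e_1, e_2, \ldots, e_k\}$ and a spanning tree $T$ essentially as in Theorem~\ref{theo:avec-mindegree-girth6}. The construction of $T$ is modified so that a BFS-tree from $v^*$ into $N_{\leq 3}(v^*)$ is embedded, giving $d_T(u, v^*) = d_G(u, v^*) \leq 3$ for every $u \in N_{\leq 3}(v^*)$. Define the weight function $c$ by setting $u_M := v^*$ for $u \in N_{\leq 3}(v^*)$ and $u_M$ equal to a nearest vertex of $V(M)$ in $T$ otherwise. The error bound $|{\rm avec}_c(T)-{\rm avec}(T)| \leq O(1)$ still holds, and Lemma~\ref{la:N3} yields $\overline{c}(e_1) := c(u^*) + c(v^*) \geq |N_{\leq 3}(v^*)| \geq \Delta^*$, while Lemma~\ref{la:neighbourhood-in-C4-free-bipartite} gives $\overline{c}(e_j) \geq \delta^*$ for $j \geq 2$.

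Proceed as in Theorem~\ref{theo:avec-mindegree-girth6} to reduce to bounding ${\rm avec}_{\overline{c}'}(L^6[M])$, where $\overline{c}'(e) = \overline{c}(e)/\delta^*$. With $N := n/\delta^*$ and $W := \Delta^*/\delta^*$ we have $\overline{c}'(e_1) \geq W$, $\overline{c}'(e_j) \geq 1$ for $j \geq 2$, and $\sum \overline{c}' = N$. The key new ingredient is an enhanced path estimate: since one vertex of $L^6[M]$ has weight at least $W$ and every other has weight at least $1$, the graph $L^6[M]$ has at most $N-W+1$ vertices and hence diameter at most $N-W$, so $e_{L^6[M]}(e_1) \leq N-W$. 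Splitting
\[
\sum_e \overline{c}'(e)\, e_{L^6[M]}(e) = \overline{c}'(e_1)\, e_{L^6[M]}(e_1) + \sum_{e\neq e_1} \overline{c}'(e)\, e_{L^6[M]}(e),
\]
bounding the first term by $\overline{c}'(e_1)(N-W)$, and applying Lemma~\ref{la:weighted-avec-maximise-by-path} to the modified weight function obtained by replacing $\overline{c}'(e_1)$ with $1$ (total weight $\leq N-W+1$) to bound the second term by $\tfrac{3}{4}(N-W+1)^2 - e_{L^6[M]}(e_1)$, a direct computation produces
\[
{\rm avec}_{\overline{c}'}(L^6[M]) \leq \frac{(N-W)(3N+W)}{4N} + O(1).
\]

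Chaining this with ${\rm avec}(G) \leq 6\,{\rm avec}_{\overline{c}'}(L^6[M]) + O(1)$ from the inequalities of Theorem~\ref{theo:avec-mindegree-girth6}, and substituting $N = n/\delta^*$ and $W = \Delta^*/\delta^*$, simplifies to the stated bound $\frac{n-\Delta^*}{2\delta^*}\cdot\frac{9n+3\Delta^*}{n} + 21$. The main obstacle is the simultaneous spanning tree construction of the second paragraph: the subtrees $T(e_j)$ are designed around the pairwise disjoint sets $N_{\leq 2}(e_j)$, which may overlap with $N_{\leq 3}(v^*)$, so a $T$ that respects both the $T(e_j)$'s and the shortest-path structure from $v^*$ into $N_{\leq 3}(v^*)$ requires a careful hybrid construction in the spirit of \cite{DanOsa-manu}. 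Once such a $T$ is available, the enhanced path estimate above reduces everything else to a straightforward calculation.
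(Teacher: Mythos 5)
Your overall strategy is the paper's strategy: use Lemma \ref{la:N3} to place weight at least $\Delta^*$ on the distinguished edge $e_1$ and then exploit the resulting heavy vertex of the auxiliary weighted graph. However, the decisive computation does not follow from the steps you state. Write $A=\overline{c}'(e_1)$. You know only $A\geq W$, with no upper bound: $c(u^*)+c(v^*)$ counts every vertex whose nearest matching vertex lies on $e_1$, which can greatly exceed $|N_{\leq 3}(v^*)|$. Your second (path) term already contributes roughly $\tfrac{3}{4}(N-W+1)^2/N$, and the target $\tfrac{(N-W)(3N+W)}{4N}$ exceeds $\tfrac{3(N-W)^2}{4N}$ by exactly $\tfrac{W(N-W)}{N}$; so your first term must satisfy $A(N-W)\leq W(N-W)+O(N)$, which is false whenever $A$ is appreciably larger than $W$ (for instance $A=2W$, $W=N/4$ makes the left side about twice the right side). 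The estimate $e_{L^6[M]}(e_1)\leq N-W$ is true but too weak to be multiplied by the unbounded factor $A$. The repair is either to use the sharper bound $e_{L^6[M]}(e_1)\leq |M|-1\leq \sum_{j\geq 2}\overline{c}'(e_j)=N-A$ and maximize $\bigl(A(N-A)+\tfrac{3}{4}(N-A+1)^2\bigr)/N$ over $A\geq W$ (the expression is decreasing in $A$, so the maximum at $A=W$ gives exactly the target), or to do what the paper does: peel off only the \emph{fixed} amount $\Delta^*-\delta^*$ from $\overline{c}(e_1)$, leave the unbounded remainder inside the weight function fed to Lemma \ref{la:weighted-avec-maximise-by-path}, and multiply only the fixed coefficient $(\Delta^*-\delta^*)/n$ by the eccentricity bound $|M|-1$.

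The second gap is the one you flag but defer: if $e_2,\dots,e_k$ are chosen at distance $5$ from the growing matching exactly as in Theorem \ref{theo:avec-mindegree-girth6}, then $N_{\leq 3}(v^*)$ may intersect $N_{\leq 2}(e_j)$ for $j\geq 2$, and each vertex of the intersection can be assigned to only one matching edge, so the lower bounds $\overline{c}(e_1)\geq\Delta^*$ and $\overline{c}(e_j)\geq\delta^*$ cannot both be guaranteed. This is not merely a spanning-tree technicality to be absorbed into a ``hybrid construction'': the paper changes the matching rule itself, requiring $d_G(e_i,e_1)\geq 6$ for all $i\geq 2$ (and $d_G(e_i,e_j)\geq 5$ for $2\leq j<i$), which makes $N_{\leq 3}(e_1)$ disjoint from every $N_{\leq 2}(e_i)$. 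The price is that $e_1$ may then be at $L$-distance $7$ from its nearest matching edge, so $L^6[M]$ need not be connected; the paper repairs this by passing to the graph $H$ obtained from $L^6[M]$ by joining $e_1$ to all matching edges within $L$-distance $7$, which costs only an extra additive constant. Both of your gaps are fixable along these lines, but as written the argument is incomplete at these two points.
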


\begin{proof}
Let $v_1$ be a vertex of degree $\Delta$ and let $e_1$ be an edge incident with $v_1$. We 
first find a matching $M$ of $G$ as follows. Let $M= \{e_1\}$. 
If there exists an edge $e_2$ with $d_G(e_1, e_2)=6$, add $e_2$ to $M$. 
Assume that $M=\{e_1, e_2,\ldots,e_{i-1}\}$. If there exists an edge $e_i$ satisfying \\  
(i) $d_G(e_i,e_1)\geq 6$, \\
(ii)  $\min \{d_G(e_i,e_j) \ | \ j = 2,3,\ldots,i-1\}  \geq 5$, and \\
(iii) we have equality in (i) or (ii) or both, \\
then add $e_i$ to $M$. 
We repeat this process until, after $k$ steps say, every edge not in $M_0\cup \{e_1\}$ is within distance $5$ of $e_1$, or within distance $4$ of an edge in $M_0$.  
Let $M=\{e_1, e_2,\ldots, e_k\}$. 

The sets $N_{\leq 3}(e_1)$ and $N_{\leq 2}(e_i)$ for $i=2,3,\ldots,k$ are pairwise 
disjoint. 
Let $T(e_1)$ be a spanning tree of $N_{\leq 3}(e_1)$ that 
contains $e_1$ and preserves the distances to $e_1$. 
For $i=2,3,\ldots, k$ let $T(e_i)$ be a spanning tree of $N_{\leq 2}(e_i)$ that 
contains $e_i$ and preserves the distances to $e_i$. 
Then the trees $T(e_i)$, $i=1,2,\ldots,k$, are vertex disjoint,
so the union $T(e_1) \cup \bigcup_{i=2}^k T(e_i)$ forms a subforest $T_1$ of $G$.  
It follows from the construction of $M$ that for every $i \in \{2,3,\ldots,k\}$
there exists an edge $f_i$ in $G$ joining a vertex in $T(e_i)$ to a vertex in $T(e_j)$
for some $j$ with $1 \leq j <i$.  Hence $T_2:=T_1 + \{f_2, f_3,\ldots, f_k\}$ is a subtree
of $G$. 
We extend $T_2$ to a spanning tree $T$ of $G$ that preserves the distances 
to a nearest vertex in $V(M)$.
In $T$ every vertex is within distance $6$ from some vertex in $V(M)$.  
Since the average eccentricity of any spanning tree of $G$ is not less than 
the average eccentricity of $G$, it suffices to show that  
\begin{equation} \label{eq:bound-on-avec-T-max-degree} 
{\rm avec}(T) \leq 
\frac{n-\Delta^*}{2\delta^*} \,  \frac{9n+3\Delta^*}{n} + 21.
\end{equation}
For every vertex $u \in V (T)$ let $u_M$ be a vertex in $V(M)$ closest to $u$ in $T$. 
We may assume that $u_M$ is a vertex incident with $e_i$ whenever $u\in V(T(e_i))$. 
As in the proof of Theorem \ref{theo:avec-mindegree-girth6} we define a
weight function $c: V(T) \rightarrow \mathbb{R}^{\geq 0}$ by 
\[ c(v) = | \{u \in V(T) \ | \ u_M = v\}|. \]
Now $d_T(x, x_M) \leq 6$ for all $x\in V(G)$. The same arguments as in the 
proof of Theorem \ref{theo:avec-mindegree-girth6} show that 
\begin{equation}  \label{eq:avec(T)-vs-avec_c(T)-maxdegree}
{\rm avec}(T) \leq {\rm avec}_c(T) + 6. 
\end{equation}
We consider the line graph $L$ of $T$ and define a new weight function 
$\overline{c}$ on $V(L) = E(T)$ by
\[ \overline{c}(uv) = \left\{ \begin{array}{cc}
 c(u) + c(v) & \textrm{if $uv\in M$,} \\
 0 & \textrm{if $uv \notin M$}. 
\end{array} \right. \]
As in the proof of Theorem \ref{theo:avec-mindegree-girth6}, we have
\begin{equation}  \label{eq:avec(T)-vs-avec(L)}
{\rm avec}_c(T) \leq {\rm avec}_{\overline{c}}(L) +1.
\end{equation}
Let $H$ be the graph obtained from $L^6[M]$ by joining $e_1$ to every edge
$e_i \in M$ for which $d_L(e_1,e_i)\leq 7$. 
Such edges $e_i$ exist since by the construction of $M$ we have 
$d_T(e_1, e_2)=6$ and thus $d_L(e_1, e_2) \leq 7$. 
Essentially the same argument as in the proof of Theorem 
\ref{theo:avec-mindegree-girth6} shows that $H$ is connected. \\
Let $e,f \in M$ and let $P$ be a shortest path from $e$ to $f$ in $H$ of length $\ell$ say. First assume that $P$ does not pass through $e_1$. Then each edge of $P$ yields a path in $L$ of length $6$, so $P$ yields a path from $e$ to $f$ of length at most $6\ell$. Now assume that $P$ passes through $e_1$. Then each edge on $P$ not incident with $e_1$ yields a path of length at most $6$ in $L$, while each edge of $P$ incident with $e_1$ yields a path of length at most $7$ in $L$. Since $P$ has at most two edges incident with $e_1$, $P$ yields a path of length at most $6\ell+2$. Hence
\begin{equation} \label{eq:avec(L)-vs-avec(H)} 
d_L(e,f) \leq 6d_{H}(e,f) +2. 
\end{equation} 
Now, for every edge $f \in E(T)$ there exists an edge $g \in M$ such that 
$d_L(f,g) \leq 6$. Hence 
$e_L(e) \leq 6 \, {\rm avec}_{\overline{c}}H(e) + 8$ for every $e \in  M$, and thus
\begin{equation}  \label{eq:avec(L)-vs-avec(H)}
{\rm avec}_{\overline{c}}(L) \leq 6 {\rm avec}_{\overline{c}}(H) + 8. 
\end{equation}
As in \eqref{eq:lower-bound-on-N2(e)-girth6} we have  
\begin{equation} \label{eq:lower-bound-weight-of-ei}
\overline{c}(e_2),  \overline{c}(e_3), \ldots, \overline{c}(e_k)  
                  \geq \delta^*. 
\end{equation}
By Lemma \ref{la:N3} we have 
\begin{equation} \label{eq:lower-bound-weight-of-e1}
\overline{c}(e_1) \geq |N_{\leq 3}(v_1)| \geq \Delta^*. 
\end{equation}
Since 
$\sum_{e\in M} \overline{c}(e)  = \sum_{v \in V(T)} c(v) = n$, 
we have
$|M| \leq \frac{n}{\delta^*}$. 
We now modify the weight function $\overline{c}$ to obtain a new weight
function $\overline{c}'$. We define  
\[ \overline{c}'(e_i) = \left\{ \begin{array}{cc} 
 \frac{\overline{c}(e_i) -\Delta^* + \delta^*}{\delta^*} & \textrm{if $i=1$,} \\[1mm]
 \frac{\overline{c}(e_i)}{\delta^*} & \textrm{if $i\geq 2$.} 
 \end{array} \right. \] 
Clearly, 
$\sum_{v\in V(H[M])} \overline{c}'(v) = \frac{n-\Delta^*+\delta^*}{\delta^*}=:N^*$. 
Hence 
\begin{eqnarray}   \label{eq:express-avec-as -linear-combination-1}
{\rm avec}_{\overline{c}'}(H) 
 & = & \frac{ EX_{\overline{c}'}(H)}{N^*} \nonumber \\
 & = & \frac{ \frac{1}{\delta^*} \Big[ \sum_{u\in M} \overline{c}(u) e_{H}(u)
        - (\Delta^* - \delta^*) e_{H}(e_1) \Big]}{N^*} \nonumber \\
 & = & 
  \frac{ EX_{\overline{c}}(H) - (\Delta^* - \delta^*)e_H(e_1)}{n-\Delta^* + \delta^*} 
                            \nonumber \\
 & = & \frac{n}{n-\Delta^* + \delta^*} {\rm avec}_{\overline{c}}(H) 
       - \frac{\Delta^* - \delta^*}{n-\Delta^* + \delta^*} e_H(e_1).
\end{eqnarray}
Rearranging yields
\begin{equation} \label{eq:express-avec-as -linear-combination-2}
{\rm avec}_{\overline{c}}(H) 
   =  \frac{n-\Delta^* + \delta^*}{n} {\rm avec}_{\overline{c}'}(H) 
       + \frac{\Delta^* - \delta^*}{n} e_H(e_1).
\end{equation}
We now bound the two terms on the right hand side of 
\eqref{eq:express-avec-as -linear-combination-2} separately. 
Note that $\overline{c}'(e_i) \geq 1$ for all $e_i \in M$.  Applying 
Lemma \ref{la:weighted-avec-maximise-by-path} we obtain 
\[ {\rm avec}_{\overline{c}'}(H) \leq {\rm avec}(P_{\lceil N^* \rceil}) 
       = \frac{3}{4} \lceil N^* \rceil - \frac{1}{2}. \]
Now $\lceil N^* \rceil  
     = \lceil \frac{n-\Delta^* + \delta^*}{\delta^*} \rceil 
     < \frac{n-\Delta^*}{\delta^*} + 2$. 
Hence 
\begin{equation}  \label{eq:bound-on-avec-H}
{\rm avec}_{\overline{c}'}(H) < \frac{3(n-\Delta^*)}{4\delta^*} + 1. 
\end{equation}   
To bound $e_H(e_1)$ note that $H$ has order $|M|$. Now 
$|M| = \sum_{e \in M} 1 \leq \sum_{e \in M} \overline{c}'(e_i) 
       = \frac{n-\Delta^*+\delta^*}{\delta^*}$. 
Hence 
\begin{equation}  \label{eq:bound-on-eccentricity-of-e1}
e_H(e_1) \leq |M|-1 = \frac{n-\Delta^*+\delta^*}{\delta^*}  - 1 
     = \frac{n-\Delta^*}{\delta^*}.
\end{equation} 
From \eqref{eq:express-avec-as -linear-combination-2}, \eqref{eq:bound-on-avec-H}
and \eqref{eq:bound-on-eccentricity-of-e1} we get, after some calculations, 
\begin{eqnarray}
{\rm avec}_{\overline{c}}(H) 
 & < & \frac{n-\Delta^* + \delta^*}{n} \Big( \frac{3(n-\Delta^*)}{4\delta^*} + 1 
    \Big) + \frac{\Delta^* - \delta^*}{n} \frac{n-\Delta^*}{\delta^*} \nonumber \\
  & = &  \frac{n-\Delta^*}{4\delta^*} \,  \frac{3n+\Delta^*}{n} 
         + \frac{3n-3\Delta^* + 4\delta^*}{4n} \nonumber \\
  & \leq  &  \frac{n-\Delta^*}{4\delta^*} \,  \frac{3n+\Delta^*}{n} + 1.         
       \label{eq:bound-on-avec-H-2}
\end{eqnarray}
Applying the inequalities \eqref{eq:avec(T)-vs-avec_c(T)-maxdegree}, \eqref{eq:avec(T)-vs-avec(L)},
\eqref{eq:avec(L)-vs-avec(H)} and  \eqref{eq:bound-on-avec-H-2}
we obtain
\begin{eqnarray*}
{\rm avec}(T) & \leq & {\rm avec}_{c}(T) + 6 \\ 
   & \leq & {\rm avec}_{\overline{c}}(L) + 7 \\
   & \leq & 6\, {\rm avec}_{\overline{c}}(H) + 15 \\
   & < & \frac{n-\Delta^*}{2\delta^*} \,  \frac{9n+3\Delta^*}{n} + 21, 
\end{eqnarray*}
as desired. 
\end{proof}

The following Theorem demonstrates that the bound in 
Theorem \ref{theo:avec-mindegree-maxdegree-girth6}
is sharp if $\delta-1$ is a prime power, except for an additive term $O(\sqrt{\Delta})$.

\begin{theorem}[\cite{AloDan-manu}]  \label{theo:cage-construction} 
Let $\delta, k \in \mathbb{N}$ such that $\delta-1$ is a prime power and $k\geq 7$. 
Then there exist a bipartite, $C_4$-free graph $G^{\delta,k}$ of minimum degree $\delta$,  
maximum degree $\Delta = \frac{(q^k-1)(q^{k-1}-1)}{(q^2-1)(q^2-q)} - \frac{1}{q}$,
where $q= \delta-1$, whose order $n_{\delta,k}$ satisfies   
\[ \Delta^* \leq 
n_{\delta,k} \leq  \Delta^* + 2 \sqrt{\Delta(\delta-2)} + \frac{1}{2}. \]
\end{theorem}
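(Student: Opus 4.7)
The plan is to construct $G^{\delta,k}$ explicitly from an incidence structure derived from the projective space $PG(k-1, q)$ over $GF(q)$, where $q = \delta - 1$. A quick algebraic rearrangement of the given formula yields $q\Delta + 1 = \binom{k}{2}_q$, which equals the total number of lines in $PG(k-1, q)$. This identity strongly suggests that the neighborhood of the high-degree hub vertex $v_1$ should correspond to a family of $\Delta$ combinatorial objects obtained, for instance, as equivalence classes of lines under a $q$-element group action fixing a distinguished base line. The assumption $k \geq 7$ ensures that the projective space is large enough to execute the construction while preserving both $C_4$-freeness and the degree requirements.

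I would proceed in four steps. First, define a bipartite graph whose two partite sets are determined by geometric type, with $v_1$ playing the hub role and the remaining vertices of degree $\delta$ arising from points (or other geometric objects) of $PG(k-1, q)$. Second, verify bipartiteness directly and $C_4$-freeness via the classical property that any two distinct lines of $PG(k-1, q)$ meet in at most one point; this property must be preserved after passing to the quotient by the chosen group action. Third, check the degree conditions: each line of $PG(k-1, q)$ carries exactly $q+1 = \delta$ points, so every non-hub vertex has degree $\delta$ (possibly after attaching a few auxiliary boundary vertices to raise deficient degrees), while $v_1$ has degree exactly $\Delta$ by the orbit count. Fourth, count vertices by analysing $N_{\leq 3}(v_1)$: trivially $|N_1(v_1)| = \Delta$, then $|N_2(v_1)| = \Delta(\delta-1)$ by $C_4$-freeness, and finally $|N_3(v_1)|$ is enumerated directly from the incidence structure.

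The lower bound $n_{\delta,k} \geq \Delta^*$ is immediate from Lemma \ref{la:N3} applied to $v_1$, since $n_{\delta,k} \geq |N_{\leq 3}(v_1)|$, so the substantive content is the upper bound. The main obstacle is twofold: first, to arrange the orbit structure so that $|N_3(v_1)|$ realises the extremal value $(\delta-1)\sqrt{\Delta(\delta-2)} + \frac{1}{2}$ that corresponds to equality in Lemma \ref{la:N3}; second, to show that at most $2\sqrt{\Delta(\delta-2)} + \frac{1}{2}$ additional boundary vertices are needed outside $N_{\leq 3}(v_1)$ in order to bring every non-hub vertex up to degree exactly $\delta$. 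Both counts are $O(\sqrt{\Delta})$ quantities and are controlled by the rigidity of the projective geometry over $GF(q)$; this is precisely why the prime-power hypothesis on $\delta - 1$ is essential, since equality in Lemma \ref{la:N3} is attainable only when a Moore-like incidence structure over $GF(q)$ is available.
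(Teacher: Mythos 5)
First, a point of orientation: the paper does not prove this statement at all --- Theorem \ref{theo:cage-construction} is quoted from the companion manuscript \cite{AloDan-manu}, so there is no in-paper proof to compare your attempt against. Judged on its own merits, your proposal contains two correct observations --- the identity $q\Delta+1=\binom{k}{2}_q$, the number of lines of $PG(k-1,q)$, and the fact that the lower bound $n_{\delta,k}\ge\Delta^*$ follows from Lemma \ref{la:N3} (a bipartite $C_4$-free graph of minimum degree at least $3$ has girth at least $6$) --- but everything beyond that is a plan rather than a proof, and the plan leaves exactly the substantive content of the theorem unproved.

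The concrete gaps are these. First, the graph is never defined: you invoke ``equivalence classes of lines under a $q$-element group action fixing a distinguished base line'' without exhibiting such an action or verifying that its nontrivial orbits all have length $q$ (so that the hub degree comes out to exactly $\Delta=(\binom{k}{2}_q-1)/q$), and quotients of $C_4$-free incidence graphs are in general \emph{not} $C_4$-free --- you state that this ``must be preserved'' but give no argument. Second, the entire difficulty of the theorem is the upper bound $n_{\delta,k}\le\Delta^*+2\sqrt{\Delta(\delta-2)}+\frac{1}{2}$, i.e., realising $|N_3(v_1)|$ at essentially the extremal value $(\delta-1)\sqrt{\Delta(\delta-2)}+\frac{1}{2}$ of Lemma \ref{la:N3} while every non-hub vertex has degree exactly $\delta$ and only $O(\sqrt{\Delta})$ surplus vertices are added. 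You explicitly name this as ``the main obstacle'' and then do not address it: the step ``attaching a few auxiliary boundary vertices to raise deficient degrees'' is precisely where both $C_4$-freeness and the vertex count can fail, and no bound on the number of repairs, nor a verification that they create no $4$-cycles, is given. An appeal to ``the rigidity of the projective geometry'' does not substitute for exhibiting the incidence structure; indeed the $\sqrt{\Delta(\delta-2)}$ term signals that the third neighbourhood must carry a design-like structure on roughly $q^{k-2}$ points, which is not the line set of $PG(k-1,q)$ that your identity points to. As written, the existence claim remains unestablished.
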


We make use of the fact that the graph in Theorem \ref{theo:cage-construction} has 
diameter at least $3$, which is easy to check from the construction (see \cite{AloDan-manu}). 
In the proof of Theorem \ref{theo:sharpness-example-min-max-degree} we make use of
a graph $G$, which was first described in \cite{AloDan-manu}.

\begin{theorem}   \label{theo:sharpness-example-min-max-degree}
Let $\delta\in \mathbb{N}$ be such that $\delta-1$ is a prime power.
Then there exist infinitely many connected graphs $G$ of minimum degree 
$\delta$ and girth $6$ with
\[ {\rm avec}(G) >  \frac{n-\Delta^*}{2\delta^*} \,  \frac{9n+3\Delta^*}{n} 
                      - O(\sqrt{\Delta}),  \]
where $\Delta$ is the maximum degree and $n$ the order of $G$.     
\end{theorem}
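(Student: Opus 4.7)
The plan is to adapt the chain construction of Theorem \ref{theo:sharpness-example-for-bip-C4-free-avec-mindegree} by replacing one end-copy of $H_q$ with the graph $G^{\delta,k}$ from Theorem \ref{theo:cage-construction}, so that the end-block carries a vertex of degree $\Delta$ while contributing only $n_{\delta,k}\approx\Delta^*$ extra vertices. Explicitly, set $q=\delta-1$, fix $k\geq 7$, and let $H^1$ be a copy of $G^{\delta,k}$ with its vertex $v^\bullet$ of maximum degree $\Delta$. Since $G^{\delta,k}$ has diameter at least three, we can choose vertices $v^1\in V(H^1)\setminus\{v^\bullet\}$ and $u^*\in V(H^1)$ with $d_{H^1}(u^*,v^1)=3$. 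For a large even integer $\ell$, take disjoint copies $H^2,\ldots,H^{\ell-1}$ of $H_q-uv$ and a copy $H^\ell$ of $H_q$, and chain them by the bridges $v^{(t)}u^{(t+1)}$ for $t=1,\ldots,\ell-1$, where $v^{(1)}:=v^1$. Finally, pick $v^*\in V(H^\ell)$ with $d_{H^\ell}(u^{(\ell)},v^*)=3$, and call the resulting graph $G$. Exactly as in the cited theorems, $G$ is bipartite and $C_4$-free (so has girth at least six), has minimum degree $\delta$ and maximum degree $\Delta$ (the latter because $v^1\neq v^\bullet$), and has order $n=n_{\delta,k}+(\ell-1)\delta^*$, which can be made arbitrarily large by taking $\ell\to\infty$.

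For the eccentricity bounds, the key point is that every chain edge is a bridge; hence, for $w\in V(H^i)$, every shortest path to any vertex of $H^j$ with $j>i$ passes through $v^{(i)}$, and every such path to $H^j$ with $j<i$ passes through $u^{(i)}$. The same distance calculation as in Theorem \ref{theo:sharpness-example-for-bip-C4-free-avec-mindegree} then gives $d(v^{(i)},v^*)=6(\ell-i)-2$ for $1\leq i\leq \ell-1$ (reading $v^{(1)}:=v^1$) and $d(u^{(i)},u^*)=6i-8$ for $2\leq i\leq \ell$. Combining these with the trivial bound $d_{H^i}(w,\cdot)\geq 0$ yields the block-wise lower bounds $e(w)\geq 6\ell-8$ for every $w\in V(H^1)$, $e(w)\geq 6(\ell-i)-2$ for every $w\in V(H^i)$ with $2\leq i\leq \ell/2$, and $e(w)\geq 6i-8$ for every $w\in V(H^i)$ with $\ell/2<i\leq \ell$.

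Writing $n_1:=n_{\delta,k}$ and summing these inequalities, using $|V(H^1)|=n_1$ and $|V(H^i)|=\delta^*$ for $i\geq 2$, one obtains
\[
n\cdot{\rm avec}(G)\ \geq\ n_1(6\ell-8)+\delta^*\sum_{i=2}^{\ell/2}\bigl(6(\ell-i)-2\bigr)+\delta^*\sum_{i=\ell/2+1}^{\ell}(6i-8).
\]
The two sums, after adding and subtracting the missing $i=1$ term, equal $\delta^*\bigl(\tfrac{9\ell^2}{2}-5\ell\bigr)-\delta^*(6\ell-8)$, exactly as in Theorem \ref{theo:sharpness-example-for-bip-C4-free-avec-mindegree}. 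Substituting $\ell=(n-n_1)/\delta^*+1$ and dividing by $n$, the dominant terms collect into $\frac{(n-n_1)(9n+3n_1)}{2\delta^* n}$ while the remaining terms are bounded by a constant depending only on $\delta$. Finally, Theorem \ref{theo:cage-construction} gives $|n_1-\Delta^*|\leq 2\sqrt{\Delta(\delta-2)}+\tfrac12$, so
\[
\Bigl|\frac{(n-n_1)(9n+3n_1)-(n-\Delta^*)(9n+3\Delta^*)}{2\delta^* n}\Bigr|\ =\ O(\sqrt{\Delta}),
\]
and the required bound ${\rm avec}(G)\geq \frac{n-\Delta^*}{2\delta^*}\cdot\frac{9n+3\Delta^*}{n}-O(\sqrt{\Delta})$ follows. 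The only real difficulty is arithmetic bookkeeping: collapsing the several additive corrections (from the $-5\ell$ term, the $i=1$ adjustment, ceiling rounding, and the substitution $n_1\leadsto\Delta^*$) into a single $O(\sqrt{\Delta})$ error.
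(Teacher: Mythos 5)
Your proposal is correct and follows essentially the same route as the paper: the same chain construction with $H^1=G^{\delta,k}$ and copies of $H_q$ (resp.\ $H_q-uv$) for the remaining blocks, the same block-wise eccentricity lower bounds $6(\ell-i)-2$ and $6i-8$, the same substitution $\ell=(n-n_{\delta,k})/\delta^*+1$, and the same final step of trading $n_{\delta,k}$ for $\Delta^*+\varepsilon$ with $\varepsilon\leq 2\sqrt{\Delta(\delta-2)}+\tfrac12$ to absorb the discrepancy into the $O(\sqrt{\Delta})$ term. The only cosmetic difference is that you sum the bound $6\ell-8$ over all of $V(H^1)$ directly, whereas the paper splits $V(H^1)$ into a block of $\delta^*$ vertices plus a remainder; the resulting totals are identical.
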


\begin{proof}
Let $q:=\delta-1$, so $q$ is a prime power. Let $k, \ell\in \mathbb{N}$ 
with $\ell$ even and $\ell$ sufficiently large. 
Consider the graph $G^{\delta,k}$ in Theorem \ref{theo:cage-construction} and let 
$u^1$ be a vertex of degree $\Delta$, and let $v^1$ be a vertex at distance 
three from $u^1$.

As in the construction of the graph $G_{\delta,\ell}$ in 
Theorem \ref{theo:sharpness-example-for-bip-C4-free-avec-mindegree} let 
$H^2, H^3,\ldots,H^{\ell}$ be isomorphic to $H_q$, but let
$H^1$ be the graph $G^{\delta,k}$. Denote the resulting graph by $G$. 
It is easy to verify that $G$ has minimum degree $\delta$ and 
maximum degree $\Delta$, and that its diameter is  $d(u_1,v_{\ell}) = 6\ell-3$. 
For the order $n$ of $G$ we have
\begin{equation} \label{eq:order-of-sharpness-example-for-max-degree}
n = n(G^{\delta,k}) + (\ell-1) n(H_{q})
     = n_{\delta,k} + (\ell-1) \delta^*. 
\end{equation}     
We now bound the average eccentricity of $G$ from below. 
For $i \in \{2,3,\ldots,\ell\}$ let $V(i)$ be the vertex set of $H^i$, 
and for $i=1$ let $V(1)$ be a set of $\delta^*$ vertices of $H^1$. 
Let $x \in V(i)$. If $i \leq \frac{1}{2}\ell$, then 
\[ e_G(x) \geq d_G(x, v^{\ell}) \geq d_G(v^i, v^{\ell}) = 6(\ell+1-i)-8, \]
and if $i > \frac{1}{2}\ell$, then
\[ e_G(x) \geq d_G(x, u^1) \geq d_G(u^i, u^1) = 6i-8. \]
The $(n_{\delta,k} - \delta^*$ vertices in $V(H^1)-V(1)$ have eccentricity at least
$6\ell -8$. Hence,   
\begin{eqnarray*}
EX(G) & = & 
  \big(\sum_{i=1}^{\ell/2} + \sum_{i=\ell/2+1}^{\ell} \big) \sum_{x \in V(i)} e_G(x)
    + \sum_{x \in V(H^1)-V(1)} e_G(x)  \\
 & \geq & 
  \Big(\sum_{i=1}^{\ell/2} \delta^* [ 6(\ell +1-i) - 8] \Big) 
  +  \Big( \sum_{i=\ell/2 + 1}^{\ell} \delta^* [6i- 8]   \Big)
   + (n_{\delta,k} - \delta^*) (6\ell-8)  \\
 & = &  ( \frac{9}{2} \ell^2 - 5 \ell) \delta^*
     + (n_{\delta,k} - \delta^*)(6\ell-8).
\end{eqnarray*}   
Now $\ell= \frac{n-n_{\delta,k}}{\delta^*} + 1$ by 
\eqref{eq:order-of-sharpness-example-for-max-degree}. Substituting this and 
dividing by $n$ yields, after simplification, 
\begin{eqnarray*} 
{\rm avec}(G) &\geq& \frac{n-n_{\delta,k}}{2\delta^*} \,  \frac{9n+3n_{\delta,k}}{n} 
                - 2 + \frac{3\delta^*}{2n} \\
     &>& \frac{n-n_{\delta,k}}{2\delta^*} \,  \frac{9n+3n_{\delta,k}}{n} 
                - 2. 
\end{eqnarray*}                
Now let $\varepsilon = n_{\delta,k} - \Delta^*$. Replacing $n_{\delta,k}$ by 
$\Delta^*+\varepsilon$ in the above lower bound, we obtain  
\begin{eqnarray*} 
{\rm avec}(G) 
     &>& \frac{n-\Delta^*-\varepsilon}{2\delta^*} \,  \frac{9n+3\Delta^*+3\varepsilon}{n} 
                - 2 \\  
     &=& \frac{n-\Delta^*}{2\delta^*} \,  \frac{9n+3\Delta^*}{n} 
         - \frac{\varepsilon}{2\delta^* n}(6n + 6 \Delta^* + 3\varepsilon) - 2.                  
\end{eqnarray*}    
Since $6n + 6 \Delta^* + 3\varepsilon \leq 12n$, and since 
$0 \leq \varepsilon \leq 2\sqrt{\Delta(\delta-2)} + \frac{1}{2}$
by Theorem \ref{theo:cage-construction} we have, 
for constant $\delta$ and large $n$ and $\Delta$, 
\[ {\rm avec}(G) >  \frac{n-\Delta^*}{2\delta^*} \,  \frac{9n+3\Delta^*}{n} 
                      - O(\sqrt{\Delta}), \]
as desired.                       
\end{proof}

Theorem \ref{theo:avec-mindegree-maxdegree-girth6} generalises 
Theorem \ref{theo:avec-mindegree-girth6} in the sense that it implies
(by setting $\Delta=\delta$) a bound that differs from Theorem
\ref{theo:avec-mindegree-girth6} only by having a weaker additive 
constant.

As in the previous section, a bound slightly weaker than that in 
Theorem \ref{theo:avec-mindegree-maxdegree-girth6} holds for all
$(C_4, C_5)$-free graphs. We omit the proof, which is very similar
to the proof of Theorem \ref{theo:avec-mindegree-maxdegree-girth6}.

\begin{theorem} \label{theo:avec-mindegree-maxdegree-C4-C5-free}
Let $G$ be a connected $(C_4, C_5)$-free graph of order $n$, minimum 
degree $\delta \geq 3$ and maximum degree $\Delta$. Then 
\[ {\rm avec}(G) \leq \frac{n-\Delta^\circ}{2\delta^\circ} \,  \frac{9n+3\Delta^\circ}{n} + 21, \]
where $\delta^\circ = 2\delta^2-5\delta+5$ if $\delta$ is even,  
$\delta^\circ = 2\delta^2-5\delta+7$ if $\delta$ is odd, and  
$\Delta^\circ= \Delta (\delta-1) + (\delta-2)\sqrt{\Delta(\delta-3)} +\frac{3}{2}$.  
\end{theorem}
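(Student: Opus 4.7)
The plan is to mimic the proof of Theorem~\ref{theo:avec-mindegree-maxdegree-girth6} almost verbatim, substituting Lemma~\ref{lowC4C5} for Lemma~\ref{la:neighbourhood-in-C4-free-bipartite} and Lemma~\ref{la:N3-for-C4-C5-free} for Lemma~\ref{la:N3} at the appropriate places. First I would pick a vertex $v_1$ of degree $\Delta$ and an incident edge $e_1$, and build a matching $M=\{e_1,e_2,\ldots,e_k\}$ greedily: add $e_i$ if $d_G(e_1,e_i)\geq 6$, $d_G(e_j,e_i)\geq 5$ for $j=2,\ldots,i-1$, and equality holds somewhere. This ensures that $N_{\leq 3}(e_1)$ and $N_{\leq 2}(e_2),\ldots,N_{\leq 2}(e_k)$ are pairwise disjoint, and that after the process every edge of $G$ is within distance $4$ of some $e_i\in M\setminus\{e_1\}$ or within distance $5$ of $e_1$.

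Next I would assemble distance-preserving spanning trees $T(e_1)$ of $N_{\leq 3}(e_1)$ and $T(e_i)$ of $N_{\leq 2}(e_i)$ for $i\geq 2$, glue them with one connecting edge per tree (these exist by construction of $M$), and extend to a spanning tree $T$ of $G$ in which every vertex lies within distance $6$ of $V(M)$. Define $c:V(T)\to\mathbb{R}^{\geq 0}$ by $c(v)=|\{u:u_M=v\}|$ where $u_M$ is the nearest vertex of $V(M)$, and the line-graph weight $\overline{c}(uv)=c(u)+c(v)$ if $uv\in M$ and $0$ otherwise. The three inequalities
\[
{\rm avec}(T)\leq {\rm avec}_c(T)+6,\qquad {\rm avec}_c(T)\leq {\rm avec}_{\overline{c}}(L)+1,
\]
and the passage to an auxiliary graph $H$, formed from $L^6[M]$ by joining $e_1$ to every $e_i\in M$ with $d_L(e_1,e_i)\leq 7$, then give ${\rm avec}_{\overline{c}}(L)\leq 6\,{\rm avec}_{\overline{c}}(H)+8$ exactly as before.

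The key quantitative change is in the lower bounds on the weights on $M$. By Lemma~\ref{lowC4C5} applied to each $e_i$ with $i\geq 2$ I get $\overline{c}(e_i)\geq \delta^\circ$, and by Lemma~\ref{la:N3-for-C4-C5-free} applied to $v_1$ I get $\overline{c}(e_1)\geq |N_{\leq 3}(v_1)|\geq \Delta^\circ$. I then normalise by setting $\overline{c}'(e_1)=(\overline{c}(e_1)-\Delta^\circ+\delta^\circ)/\delta^\circ$ and $\overline{c}'(e_i)=\overline{c}(e_i)/\delta^\circ$ for $i\geq 2$, so that $\overline{c}'(e)\geq 1$ for every $e\in M$ and the total weight is $N^*=(n-\Delta^\circ+\delta^\circ)/\delta^\circ$. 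The identity
\[
{\rm avec}_{\overline{c}}(H)=\frac{n-\Delta^\circ+\delta^\circ}{n}\,{\rm avec}_{\overline{c}'}(H)+\frac{\Delta^\circ-\delta^\circ}{n}\,e_H(e_1)
\]
reduces the problem to bounding two quantities: ${\rm avec}_{\overline{c}'}(H)\leq \tfrac{3}{4}\lceil N^*\rceil-\tfrac12<\tfrac{3(n-\Delta^\circ)}{4\delta^\circ}+1$ by Lemma~\ref{la:weighted-avec-maximise-by-path} applied to a path on $\lceil N^*\rceil$ vertices, and $e_H(e_1)\leq |M|-1\leq (n-\Delta^\circ)/\delta^\circ$ since the total weight of $\overline{c}'$ is at least $|M|$.

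Finally I would chain the inequalities: $
{\rm avec}(G)\leq{\rm avec}(T)\leq{\rm avec}_c(T)+6\leq{\rm avec}_{\overline{c}}(L)+7\leq 6\,{\rm avec}_{\overline{c}}(H)+15,
$ and substitute the two bounds above into the linear-combination identity to obtain $6\,{\rm avec}_{\overline{c}}(H)\leq \frac{n-\Delta^\circ}{2\delta^\circ}\cdot\frac{9n+3\Delta^\circ}{n}+6$ after the same arithmetic simplification used in the girth-$6$ case, yielding the stated bound with additive constant $21$. The only potentially delicate step is verifying that the disjointness of $N_{\leq 3}(e_1)$ and the $N_{\leq 2}(e_i)$, $i\geq 2$, still holds under the weaker $(C_4,C_5)$-free hypothesis — but this follows from the distance conditions in the construction of $M$ alone and does not use girth; triangles through $e_1$ or $e_i$ lie inside the respective neighbourhood and cause no conflict. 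Everything else is a mechanical substitution of $\delta^\circ,\Delta^\circ$ for $\delta^*,\Delta^*$.
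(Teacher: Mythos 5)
Your proposal is exactly the argument the paper intends: the paper omits this proof with the remark that it is ``very similar to the proof of Theorem~\ref{theo:avec-mindegree-maxdegree-girth6}'', and your substitution of Lemma~\ref{lowC4C5} and Lemma~\ref{la:N3-for-C4-C5-free} for the girth-$6$ lemmas, with the same matching, tree, line-graph and reweighting machinery, is that proof. Your observation that the disjointness of $N_{\leq 3}(e_1)$ and the $N_{\leq 2}(e_i)$ rests only on the distance conditions in the construction of $M$, not on the girth, correctly disposes of the one point that needs checking.
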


We do not know if the bound in Theorem \ref{theo:avec-mindegree-maxdegree-C4-C5-free}
is sharp.

\end{document}